\def\seq#1#2#3{#1_{#2},\,\ldots,#1_{#3}}
\def\w{\widetilde}
\def\b{\overline}
\def\tt{{\underline{t}}}
\def\nn{{\underline{n}}}
\def\mm{\underline{m}}
\def\1{\underline{1}}
\def\0{\underline{0}}
\def\R{\mathbb R}
\def\P{\mathbb P}
\def\Q{\mathbb Q}
\def\LLL{\mathbb L}
\def\Z{\mathbb Z}
\def\Q{\mathbb Q}
\def\C{\mathbb C}
\def\S{\mathbb S}
\def\OO{{\mathcal O}}
\def\calE{{\mathcal E}}
\def\XX{{\mathcal X}}
\def\DD{{\mathcal D}}
\def\oD{\stackrel{\circ}{D}}
\def\bD{\stackrel{\bullet}{D}}
\newtheorem{theorem}{Theorem}
\newtheorem{lemma}{Lemma}
\newtheorem{proposition}{Proposition}
\newenvironment{example}
{\smallskip\noindent{\bf Example\/}.}{\smallskip\par}
\newenvironment{examples}
{\smallskip\noindent{\bf Examples\/}.}{\medskip\par}
\newenvironment{remark}
{\smallskip\noindent{\bf Remark\/}.}{\smallskip\par}
\newenvironment{remarks}
{\smallskip\noindent{\bf Remarks\/}.}{\smallskip\par}
\newenvironment{proof}
{\noindent{\bf Proof\/}.}{{ $\Box$}\smallskip\par}
\title{Algebraic links in the Poincar\'e sphere and the Alexander polynomials
\footnote{Math. Subject Class. 14B05, 32S25, 57M25.
Keywords: algebraic links, Poincar\'e sphere, Alexander polynomial,
Poincar\'e series, topological type.
}
}
\author{
A.~Campillo,
\and F.~Delgado,\thanks{The first two authors were supported by the grant
MTM2015-65764-C3-1-P
(with the help of FEDER Program).} \and S.M.~Gusein-Zade
\thanks{
The work of the third author (Sections~\ref{sec:Resolution},
\ref{sec:One_divisorial}, \ref{sec:curve_collection})
was supported by the grant 16-11-10018 of the Russian Science Foundation.
} }
\date{}
\begin{document}
\def\eps{\varepsilon}

\maketitle

\begin{abstract}
The Alexander polynomial in several variables is defined for links in
three-dimensional homology spheres, in particular, in the Poincar\'e sphere: the intersection of the surface
$S=\{(z_1,z_2,z_3)\in\C^3: z_1^5+z_2^3+z_3^2=0\}$ with the 5-dimensional sphere
$\S_{\eps}^5=\{(z_1,z_2,z_3)\in\C^3: \vert z_1\vert^2+\vert z_2\vert^2+\vert z_3\vert^2=\eps^2\}$.
An algebraic link in the Poincar\'e sphere is the intersection of a germ $(C,0)\subset (S,0)$ of a complex
analytic curve in $(S,0)$ with the sphere $\S_{\eps}^3$ of radius $\eps$ small enough.
Here we discuss to which extend the Alexander polynomial in several variables of an
algebraic link in the
Poincar\'e sphere determines the topology of the link.
We show that, if the strict transform of a curve on $(S,0)$ does not
intersect the component of the exceptional divisor
corresponding to the end of the
longest tail in the corresponding $E_8$-diagram, then its Alexander polynomial
determines the combinatorial type of the minimal resolution of the curve and therefore
the topology of the corresponding link.
Alexander polynomial of an algebraic link in the Poincar\'e sphere coincides with the
Poincar\'e series of the filtration defined by the corresponding curve valuations. We
show that, under conditions similar for those for curves, the Poincar\'e series of a
collection of divisorial valuations determines the combinatorial type of the minimal
resolution of the collection.
\end{abstract}

%%%%%%%%%%%%%%%%%%%%%%%%%%%%%%%%%%%%%%%%%%%%%%%%%% Section 1
\section{Introduction}\label{sec:Introduction}
%%%%%%%%%%%%%%%%%%%%%%%%%%%%%%%%%%%%%%%%%%%%%%%%%%
The three-dimensional sphere is $\S_{\eps}^3=\{(z_1,z_2)\in\C^2: \vert z_1\vert^2+\vert z_2\vert^2=\eps^2\}$.
An algebraic link in the three-dimensional sphere is the intersection of a germ $(C,0)\subset (\C^2,0)$ of a
complex analytic plane curve with the sphere $\S_{\eps}^3$ with $\eps$ small enough. The number of components
of the link $K=C\cap \S_{\eps}^3$ equals the number of the irreducible components of
the curve $(C,0)$.
A link with $r$ components in the three-sphere has the well-known topological invariant: the Alexander
polynomial in $r$ variables: see, e.~g., \cite{EN}. It is known that the Alexander polynomial in several variables
determines the topological type of an algebraic link (or, equivalently, the (local) topological type of
the triple $(\C^2,C,0)$): \cite{Yamamoto}, see \cite{FAOM} for another proof of this statement.

The Alexander polynomial is defined for links in three-dimensional manifolds which are homology spheres.
The Poincar\'e sphere $\LLL$ is the intersection of the surface $S=\{(z_1,z_2,z_3)\in\C^3: z_1^5+z_2^3+z_3^2=0\}$
with the 5-dimensional sphere
$\S_{\eps}^5=\{(z_1,z_2,z_3)\in\C^3:
\vert z_1\vert^2+\vert z_2\vert^2+\vert z_3\vert^2=\eps^2\}$.
It is a three-dimensional homology sphere.
This definition describes the Poincar\'e sphere $\LLL$ as the link of a rational surface singularity of type
$E_8$. The links of other rational surface singularities are rational homology spheres, but not homology
spheres.

An algebraic link in the Poincar\'e sphere is the intersection of a germ $(C,0)\subset (S,0)$ of a complex
analytic curve in $(S,0)$ with the sphere $\S_{\eps}^5$ of radius $\eps$ small enough. The number of components of
the link $K=C\cap \S_{\eps}^5$ equals the number of the irreducible components of the curve $(C,0)$.
For a link with $r$ components in the Poincar\'e sphere $\LLL=S\cap\S^5_{\eps}$ one has its Alexander polynomial
$\Delta^K(t_1,\ldots,t_r)$ defined in the same way as for a link in the usual three-sphere $\S_{\eps}^3$.

An irreducible curve germ $(C,0)$ in a germ of a complex analytic variety $(V,0)$ defines a valuation $v_C$
on the ring $\OO_{V,0}$ of germ of functions on $(V,0)$ (called a curve valuation).
Let $\varphi:(\C,0)\to(V,0)$ be a parametrization
(an uniformization) of the curve $(C,0)$, that is ${\text{Im\,}}\varphi=(C,0)$ and $\varphi$ is an isomorphism
between punctured neighbourhoods of the origin in $\C$ and in $C$. For a function germ $f\in\OO_{V,0}$,
the value $v_C(f)$ is defined as the degree of the leading term in the Taylor series of the function
$f\circ\varphi:(\C,0)\to\C$:
$$
f\circ\varphi(\tau)=a\tau^{v_C(f)}+{\text{\ terms of higher degree,}}
$$
where $a\ne 0$; if $f\circ\varphi\equiv0$, one defines $v_C(f)$ to be equal to $+\infty$.

A collection $\{(C_i,0)\}$ of irreducible curves in $(V,0)$, $i=1,\ldots,r$, defines the collection $\{v_{C_i}\}$
of valuations. For a collection $\{v_i\}$ of discrete valuations on $\OO_{V,0}$, $i=1,\ldots,r$, there is defined its
Poincar\'e series $P_{\{v_i\}}(t_1,\ldots,t_r)\in\Z[[t_1,\ldots, t_r]]$: \cite{CDK}. In \cite{Duke} it was shown
that, for $(V,0)=(\C^2,0)$, the Poincar\'e series $P_{\{v_{C_i}\}}(t_1,\ldots,t_r)$ of
a collection of (different)
curve valuations coincides with the Alexander polynomial $\Delta^C(t_1,\ldots,t_r)$ in $r$ variables of the algebraic link
defined by the curve $C=\bigcup\limits_{i=1}^rC_i$ for $r>1$. (For $r=1$ one has
$P_{v_C}(t)=\frac{\Delta_C(t)}{1-t}$) In \cite{CMH} it was shown that
the same holds for an algebraic link in the Poincar\'e sphere.

Here we discuss to which extend the Alexander polynomial in several variables of an algebraic link in the
Poincar\'e sphere (that is the Poincar\'e series of the corresponding curve) determines the topology of the link.
Two curves on $(S,0)$ with the same (from the combinatorial point of view) minimal resolutions define topologically
equivalent links in the Poincar\'e sphere.
We show that two curves (even irreducible ones) on $(S,0)$ with
combinatorially different minimal resolutions may have equal Alexander polynomials. The (infinite-dimensional)
space of arcs on $(S,0)$ consists of 8 irreducible components. These components are in one-to-one correspondence
with the components of the exceptional divisor of the minimal (good) resolution of $(S,0)$. A component of the
space of arcs consists of all arcs whose strict transforms intersect the corresponding component of the
exceptional divisor.
We show that, if the strict transform of a (possibly reducible) curve on $(S,0)$ does not
intersect one particular component of the exceptional divisor, namely the one corresponding to the end of the
longest tail in the corresponding $E_8$-diagram, then its Poincar\'e series (that is the Alexander polynomial
of the corresponding link) determines the combinatorial type of the minimal resolution of the curve and therefore
the topology of the corresponding link.

We discuss an analogous question for a collection of divisorial valuations on the $E_8$ surface singularity.
We show that, if no divisor from the collection is born by a sequence of blow-ups starting from a smooth
point of the same component as above, the Poincar\'e series of the collection determines the
combinatorial type of the minimal resolution of the collection of valuations.

The $E_8$ surface singularity is the quotient of the plane $\C^2$ by the binary icosahedral group.
Therefore the results of this paper may have an interpretation it terms of equivariant topology of curves
and/or divisors on the plane with the binary icosahedral group action.

%%%%%%%%%%%%%%%%%%%%%%%%%%%%%%%%%%%%%%%%%%%%%%%%%% Section 2
\section{Poincar\'e series of curve and divisorial valuations on the $E_8$-singularity}\label{sec:Resolution}
%%%%%%%%%%%%%%%%%%%%%%%%%%%%%%%%%%%%%%%%%%%%%%%%%%
Let $(S,0)$ be a normal surface singularity of type $E_8$ and let $(C_i,0)$, $i=1,2, \ldots, r$, be
(different) irreducible curves (branches) on $(S,0)$. Let $(C,0)=\bigcup_{i=1}^r(C_i,0)$.
The curves $(C_i,0)$ define curve valuations on the ring $\OO_{S,0}$ of germs of functions on $(S,0)$
in the usual way (see Section~\ref{sec:Introduction}). Let $P_{C}(t_1,\ldots,t_r)$ be
the Poincar\'e series of this set of valuations.

Let $\pi:(\XX,\DD)\to(S,0)$ be an embedded resolution
of the curve $C=\bigcup_{i=1}^r C_i$. This means that:
\begin{enumerate}
 \item[1)] $\XX$ is a smooth surface, $\DD=\pi^{-1}(0)$;
 \item[2)] $\pi$ is a proper complex analytic map;
 \item[3)] the total transform $\pi^{-1}(C)$ of the curve $C$ is a normal crossing divisor on $\XX$ (this
 implies that the exceptional divisor $\DD$ is a normal crossing divisor on $\XX$ as well).
\end{enumerate}
Let $\DD=\bigcup\limits_{\sigma\in\Gamma} D_{\sigma}$ be the decomposition of the exceptional divisor
into irreducible components. All the components $D_{\sigma}$ are isomorphic to the complex projective line
$\C\P^1$. Let $(D_{\sigma}\cdot D_{\delta})$ be the intersection matrix of the components $D_{\sigma}$.
All the self-intersection numbers $D_{\sigma}\cdot D_{\sigma}$ are negative, for $\sigma\ne\delta$
the intersection number
$D_{\sigma}\cdot D_{\delta}$ equals $1$ if the components $D_{\sigma}$ and $D_{\delta}$ intersect each other
and equals $0$ otherwise. The entries of the minus inverse matrix
$(m_{\sigma\delta})=-(D_{\sigma}\cdot D_{\delta})^{-1}$ are positive integers. (Let us recall that we
consider the case of an $E_8$-singularity.) Let $\widetilde{C}_i$ be the strict transform of the branch
$C_i$, i.~e., the closure of the preimage $\pi^{-1}(C_i\setminus\{0\})$, $i=1,2, \ldots, r$,
$\widetilde{C}=\bigcup_{i=1}^r \widetilde{C}_i$. Let $D_{\sigma(i)}$ be the component of the exceptional
divisor $\DD$ intersecting the strict transform $\widetilde{C}_i$ and let
$\mm_{\sigma}:=(m_{\sigma\sigma(1)}, \ldots, m_{\sigma\sigma(r)})\in \Z_{>0}^r$. Let $\oD_{\sigma}$ be the
``smooth part'' of the component $D_{\sigma}$ in the total transform $\pi^{-1}(C)$, i.~e., the component
$D_{\sigma}$ minus the intersection points with all other components of the exceptional divisor and with
the strict transforms $\widetilde{C}_i$. In \cite{CMH} it was shown that
\begin{equation}\label{eq:ACampo}
P_C(\tt)=\prod_{\sigma\in\Gamma}\left(1-\tt^{\mm_{\sigma}}\right)^{-\chi(\oD_{\sigma})},
\end{equation}
where $\tt:=(t_1,\ldots, t_r)$, $\tt^{\mm}:=t_1^{m_1}\cdots t_r^{m_r}$, for $\mm=(m_1,\cdots,m_r)\in\Z^r$,
$\chi(\cdot)$ is the Euler characteristic.

\begin{remarks}
 {\bf 1.} One has an essential difference between Equation~\ref{eq:ACampo} and the corresponding equation for all
 other rational surface singularities from \cite{CMH}. For any other rational surface
 singularity the numbers $m_{\sigma\delta}$ are, generally speaking, not integers and the Poincar\'e series
 is obtained from a certain rational power series  (somewhat similar to
(\ref{eq:ACampo})) in variables $T_{\sigma}$
 corresponding to all the components of the exceptional divisor $\DD$ by eliminating all the monomials with
 non-integer exponents and subsequent substitution of each variable $T_{\sigma}$ by a product of variables
 $t_1$, \dots, $t_r$ corresponding to the branches.

 {\bf 2.} Equation~\ref{eq:ACampo} gives the Poincar\'e series $P_C(\tt)$ in the form
 \begin{equation}\label{eq:Acampo2}
  \prod\limits_{\mm\in (\Z_{\ge0})^r\setminus{\{\0\}}}(1-\tt^{\mm})^{s_{\mm}},
 \end{equation}
 where $s_{\mm}$ are integers. For the $E_8$-singularity this product has finitely many factors. This does
 not hold, in general, for a curve on an arbitrary rational surface singularity. Any power series in the
 variables $t_1$, \dots, $t_r$ with the free term equal to $1$ has a unique representation of the
 form~\ref{eq:Acampo2} (generally speaking, with infinitely many factors).
\end{remarks}

The dual graph of the minimal resolution of the $E_8$-singularity has the standard $E_8$ form (see
Figure~\ref{fig:E_8-res}), all the self-intersection numbers of the components are equal to $-2$.
 An embedded resolution $\pi:(\XX,\DD)\to(S,0)$ of the curve $C$ is obtained from the minimal resolution of the singularity $(S,0)$
by a sequence of blow-ups made (at each step) at intersection points of the strict transform of the
curve $C$ and the exceptional divisor. Some intersection points of the strict transform of the curve $C$
and the exceptional divisor may be at the same time intersection points of components of the exceptional
divisor. Let $\pi':(\XX',\DD')\to(S,0)$ be the resolution of $(S,0)$ obtained only by all the blow-ups at the
points of this sort. The dual graph of the resolution $\pi'$ is of the ``three-tails'' form and is
obtained from the one of the minimal resolution by inserting some (maybe zero) new vertices between the
vertices of the minimal one. The strict transform $\overline{(\pi')^{-1}(C_i\setminus \{0\})}$ of
the branch $C_i$, $i=1, \ldots, r$, intersects the exceptional divisor $\DD'$ at a smooth point of it, i.~e.,
not at an intersection points of its components. Let $\Gamma_0\subset \Gamma$ be the set of indices
$\sigma$ numbering the components of $\DD'$.
\medskip

A divisorial valuation $v$ on $\OO_{S,0}$ is defined by a component of the exceptional divisor $\DD=\pi^{-1}(0)$
of a resolution $\pi:(\XX,\DD)\to(S,0)$ of the singularity. For a function germ $f\in\OO_{S,0}$, the value
$v(f)$ is the multiplicity of the lifting $f\circ\pi$ of the function $f$ to the space $\XX$ of the resolution
along the corresponding component. Let $v_1, \ldots, v_r$ be divisorial
valuations on $\OO_{S,0}$. A resolution of the collection $\{v_i\}$ of divisorial valuations is a
resolution $\pi:(\XX,\DD)\to(S,0)$ whose exceptional divisor contains all the components defining the
valuations. For a resolution $\pi:(\XX,\DD)\to(S,0)$ of the set $\{v_i\}$, let
$\DD=\bigcup\limits_{\sigma\in\Gamma} D_{\sigma}$ be the representation of the exceptional divisor as the
union of its irreducible components, let the integers $m_{\sigma\delta}$ be defined as
above, and let $\bD_{\sigma}$ be the ``smooth part'' of the component $D_{\sigma}$ in the exceptional
divisor $\DD$, i.~e., the component $D_{\sigma}$ minus the intersection points with all other components
of the exceptional divisor. For $i\in\{1, \ldots, r\}$, let $D_{\sigma(i)}$ be the component of the
exceptional divisor $\DD$ defining the divisorial valuation $v_i$. Just as in \cite{CMH} (see also \cite{DG} and
\cite{Invent}), one can show that the Poincar\'e series of the set $\{v_i\}$ of divisorial valuations is given by
\begin{equation}\label{eq:ACampo-divis}
P_{\{v_i\}}(t_1,\ldots, t_r)=\prod_{\sigma\in\Gamma}\left(1-\tt^{\mm_{\sigma}}\right)^{-\chi(\bD_{\sigma})}.
\end{equation}

For a component $D_{\sigma}$ of the exceptional divisor $\DD$ of the resolution $\pi$, let $\ell_{\sigma}$ be
a germ of a smooth curve on $\XX$ transversal to $D_{\sigma}$ at a smooth point of $\DD$ (i.~e., at a point of
$\bD_{\sigma}$), let $(L_{\sigma},0)\subset(S,0)$ be the blow-down $\pi(\ell_{\sigma})$ of the curve $\ell_{\sigma}$
and let $L_{\sigma}$ be given by an equation $h_{\sigma}=0$ with $h_{\sigma}\in\OO_{S,0}$. (Let us recall that an
arbitrary curve germ on the $E_8$ surface singularity is Cartier, i.~e., is defined by an equation.) The curve
germ $L_{\sigma}$ and/or the function germ $h_{\sigma}$ are called {\em a curvette} at the component $D_{\sigma}$.

%%%%%%%%%%%%%%%%%%%%%%%%%%%%%%%%%%%%%%%%%%%%%%%%%% Section 3
\section{The Poincar\'e polynomial of an irreducible curve and the topological type}\label{sec:One_branch}
%%%%%%%%%%%%%%%%%%%%%%%%%%%%%%%%%%%%%%%%%%%%%%%%%%
For a plane valuation centred at the origin (say, for a curve or for a divisorial one) the Poincar\'e
series determines the combinatorial type of the minimal resolution: \cite{FAOM}. This does not hold, in general,
for a valuation on a surface singularity. The problem is partially related with the following one.
A resolution of a valuation (a curve or a divisorial one) on a surface singularity $(S,0)$ is at the
same time a resolution of the singularity itself. The minimal resolution of the valuation starts from
a certain point on the exceptional divisor of the minimal resolution of the surface. Therefore a possibility
to determine the combinatorial type of the minimal resolution of a valuation from its Poincar\'e series
assumes that it is possible to determine the component (or the intersection of two components) of the
exceptional divisor of the (minimal) resolution of the surface from which the resolution of the
valuation starts (up to possible symmetries of the dual graph of the minimal resolution of the surface).
However, in general this is not possible.

\begin{example}
 The exceptional divisor of the minimal resolution of the $A_k$ surface singularity consists of $k$
 irreducible components. One can show that a curvette at each of these components is smooth. This follows,
 e.~g., from the results of \cite{L-J_G-S}. Also this can be deduced from the computation of the Poincar\'e series
 of the curvettes using \cite[Theorem~2]{CMH} (which gives $P(t)=\frac{1}{1-t}$).
\end{example}

It appears that the same problem can be met for valuations on the surface singularity of type $E_8$.

\begin{examples}
{\bf 1.} The dual graph of the minimal resolution of the $E_8$-singularity is shown on Figure~\ref{fig:E_8-res}.
\begin{figure}[h]
$$
\unitlength=0.50mm
\begin{picture}(80.00,40.00)(0,10)
\thinlines
\put(-15,30){\line(1,0){90}}
\put(-15,30){\circle*{2}}
\put(0,30){\circle*{2}}
\put(15,30){\circle*{2}}
\put(30,30){\circle*{2}}
\put(45,30){\circle*{2}}
\put(60,30){\circle*{2}}
\put(75,30){\circle*{2}}
\put(15,30){\line(0,-1){15}}
\put(15,15){\circle*{2}}
\put(-16,33){{\scriptsize$1$}}
\put(-1,33){{\scriptsize$2$}}
\put(14,33){{\scriptsize$3$}}
\put(29,33){{\scriptsize$5$}}
\put(44,33){{\scriptsize$6$}}
\put(59,33){{\scriptsize$7$}}
\put(74,33){{\scriptsize$8$}}
\put(17,15){{\scriptsize$4$}}
\end{picture}
$$
\caption{The dual graph of the $E_8$-singularity.}
\label{fig:E_8-res}
\end{figure}
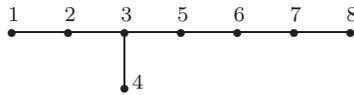
The exceptional divisor consists of 8 irreducible components $D_1$, \dots, $D_8$ numbered as
in the order shown on
Figure~\ref{fig:E_8-res}. Let $C'$ be a curvette at the component $D_6$ and let $C''$ be the blow down of
the plane curve singularity of type $A_4$ (that is with a local equation $u^5+v^2=0$) transversal to the
component $D_8$ at a smooth point of the exceptional divisor (that is, not at the intersection point of
$D_8$ with $D_7$). The minimal resolution of the curve $C'$ coincides with the minimal resolution of the
surface. The dual graph of the minimal resolution of the curve $C''$ is shown on Figure~\ref{fig:L_2-res}.
\begin{figure}
$$
\unitlength=0.50mm
\begin{picture}(120.00,70.00)(0,-35)
\thinlines
\put(-15,30){\line(1,0){90}}
\put(-15,30){\circle*{2}}
\put(0,30){\circle*{2}}
\put(15,30){\circle*{2}}
\put(30,30){\circle*{2}}
\put(45,30){\circle*{2}}
\put(60,30){\circle*{2}}
\put(75,30){\circle*{2}}
\put(15,30){\line(0,-1){15}}
\put(15,15){\circle*{2}}
\put(-16,33){{\scriptsize$1$}}
\put(-1,33){{\scriptsize$2$}}
\put(14,33){{\scriptsize$3$}}
\put(29,33){{\scriptsize$5$}}
\put(44,33){{\scriptsize$6$}}
\put(59,33){{\scriptsize$7$}}
\put(74,33){{\scriptsize$8$}}
\put(17,15){{\scriptsize$4$}}
\put(75,30){\line(0,-1){45}}
\put(75,15){\circle*{2}}
\put(75,0){\circle*{2}}
\put(75,-15){\circle*{2}}
\put(77,15){{\scriptsize$9$}}
\put(77,0){{\scriptsize$10$}}
\put(77,-15){{\scriptsize$12$}}
\put(75,-15){\line(-1,0){15}}
\put(60,-15){\circle*{2}}
\put(57,-13){{\scriptsize$11$}}
\put(75,-15){\vector(1,-1){11}}
\end{picture}
$$
\caption{The dual graph of the minimal resolution of the curve $L_2$.}
\label{fig:L_2-res}
\end{figure}
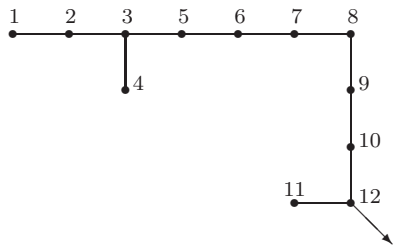
Using (\ref{eq:ACampo}) one can show that
$$
P_{C'}(t)=P_{C''}(t)=\frac{(1-t^{12})(1-t^{18})}{(1-t^{4})(1-t^{6})(1-t^{9})}\,.
$$
Thus the Poincar\'e series of a curve valuation on the $E_8$-singularity does not determine the combinatorial
type of its minimal resolution. Moreover, it does not determine the component of the exceptional divisor
of the minimal resolution of the surface singularity intersecting the strict transform of the curve. (We do not
know whether or not the (algebraic) links in the Poincar\'e sphere corresponding to the curves $L_1$ and $L_2$
are topologically equivalent.)

{\bf 2.} Let $D'$ be the divisor born under the blow-up of the component $D_{12}$ of the resolution
shown on Figure~\ref{fig:L_2-res} (at a smooth point of the exceptional divisor) and let $D''$ be the divisor
born after 7 blow-ups starting at a smooth point of the component $D_6$ and produced at each step at a
smooth point of the previously born divisor. Let $\nu'$ and $\nu''$ be the divisorial valuations defined by
the divisors $D'$ and $D''$ respectively. Using~(\ref{eq:ACampo-divis}) one can show that
$$
P_{\nu'}(t)=P_{\nu''}(t)=\frac{(1-t^{12})(1-t^{18})}{(1-t^{4})(1-t^{6})(1-t^{9})(1-t^{19})}\,.
$$
Thus the Poincar\'e series of a divisorial valuation on the $E_8$-singularity does not determine the
combinatorial type of its minimal resolution. Other examples of this sort can be obtained by applying
the same additional modifications at smooth points of the divisors $D'$ and $D''$.
\end{examples}

The examples show that one cannot restore, in general, the combinatorial type of the (minimal) resolution
of a valuation (say, of an irreducible curve) on the $E_8$ surface singularity from its Poincar\'e series.
However, often this is the case. According to~\cite{MP} the space of arcs on the $E_8$-singularity consists
of 8 irreducible components. Each of them is the closure of
the subspace of arcs whose strict transforms intersect the
exceptional divisor of the minimal resolution of the surface at one of the components $D_1$, \dots, $D_8$.
Let us denote these spaces of arcs by $\calE_1$, \dots, $\calE_8$ respectively. One can show that only
arcs from $\calE_6$ and $\calE_8$ may have the same Poincar\'e series. Moreover, if one restricts the
consideration only to the arcs not intersecting the component $D_8$ at a smooth point of the exceptional
divisor, one can determine the combinatorial type of the (minimal) resolution of the arc from its Poincar\'e
series. The same holds for a reducible curve: if $(C,0)=\bigcup\limits_{i=1}^r (C_i,0)\subset(S,0)$
and the strict transforms of the branches $C_i$ do not intersect the component $D_8$ of the exceptional
divisor at smooth points (that is, they belong to the union $\bigcup\limits_{i=1}^7 \calE_i$),
the Poincar\'e series of the curve $C$ (in $r$ variables) determines the combinatorial type of its minimal
resolution. We shall show that analogues of these statements hold for divisorial
valuations as well.

Let $(C,0)\subset(S,0)$ be an irreducible curve on the $E_8$ surface singularity $(S,0)$
such that its minimal embedded resolution does not start from a smooth point of the
component $D_8$ of the minimal resolution of $(S,0)$.
Let $P_C(t)$ be the Poincar\'e series of the corresponding (curve) valuation on
$\OO_{S,0}$.
Let us recall that $P_C(t)=\frac{\Delta^C(t)}{1-t}$, where $\Delta^C(t)$ is the Alexander polynomial
of the knot $C\cap \LLL$ in the Poincar\'e sphere $\LLL$.

\begin{theorem}\label{theo:One_curve}
 The Poincar\'e series $P_C(t)$ determines the combinatorial type of the minimal embedded resolution
 of the irreducible curve $C\subset S$ $($and therefore the topological type of the knot $(\LLL,C\cap \LLL)$,
 where $\LLL$ is the link of the $E_8$ surface singularity $(S,0)$, i.~e., the Poincar\'e sphere$)$.
\end{theorem}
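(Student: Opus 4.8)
The plan is to recover the combinatorial type of the minimal embedded resolution of $C$ from the canonical product form~(\ref{eq:Acampo2}) of $P_C(t)$, using~(\ref{eq:ACampo}) to match its factors with features of the resolution graph. Since $r=1$ and $v_C$ is a curve valuation, one has $P_C(t)=\sum_{n\in\Gamma_C}t^{n}$, where $\Gamma_C\subset\Z_{\ge0}$ is the cofinite semigroup of values of $v_C$ on $\OO_{S,0}$; hence $P_C$ and $\Gamma_C$ determine one another, and it suffices to prove that $\Gamma_C$ determines the combinatorial type. First I would pin down the shape of this resolution. Blowing up the minimal resolution of $(S,0)$ at the point where the strict transform of $C$ meets the exceptional divisor and along its subsequent orbit, one obtains a dual graph which is the $E_8$ graph of Figure~\ref{fig:E_8-res} with, possibly, extra vertices inserted on some of its edges and with a single tail $\Lambda$ grafted at one vertex (or at one inserted edge), the tail $\Lambda$ being a bamboo-with-rupture-points terminating at the arrow; indeed, once the strict transform of $C$ meets the exceptional divisor of the three-tails resolution $\pi'$ at a smooth point, its further resolution \emph{is} the embedded resolution of a plane branch germ, so $\Lambda$ together with the way it is grafted is exactly the combinatorial data of a plane curve singularity. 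Thus the combinatorial type of the minimal embedded resolution of $C$ is encoded by the pair: the grafting location, and the isomorphism type with multiplicities of $\Lambda$.

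Next I would extract invariants from $P_C$. By~(\ref{eq:ACampo}), writing $-\chi(\oD_\sigma)=v_\sigma-2$ with $v_\sigma$ the valence of $D_\sigma$ in the total transform (the arrow counted), each end of the graph not carrying the arrow contributes a denominator factor $(1-t^{m_{\sigma\sigma(1)}})^{-1}$, while the rupture points and the arrow-carrying vertex (when it has valence two) contribute numerator factors; the exponents $m_{\sigma\sigma(1)}$ are entries of the inverse of the $E_8$ Cartan matrix attached to the resolution in question, so they are effectively computable. The hypothesis $C\notin\calE_8$ guarantees that $D_8$ stays an end of the final graph, so its multiplicity occurs in the denominator of $P_C$; combining this with the multiplicity sequence of $C$ along the $E_8$ part of the graph (read off from the smallest elements of $\Gamma_C$) and with the remaining end-multiplicities, a finite explicit computation with the inverse $E_8$ Cartan matrix lets one recognise the grafting location --- equivalently, lets one identify, as a truncation of $\Gamma_C$, the value semigroup of the curvette at the grafting vertex. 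The only coincidence here is the one made explicit in the Examples above: the curvette at $D_6$ and the blow-down of the $A_4$-branch transversal to $D_8$ both have semigroup $\langle4,6,9\rangle$, and of these the former is admissible while the latter is exactly what the hypothesis excludes. With the grafting location known, the residual data relating $\Gamma_C$ to the curvette semigroup is that of a plane branch, and the combinatorial type of $\Lambda$ is recovered from it by the classical fact~\cite{FAOM} that the Poincar\'e series of a plane valuation determines its minimal resolution. Reassembling the grafting location and $\Lambda$ yields the minimal embedded resolution of $C$; the statement about the topological type of $(\LLL,C\cap\LLL)$ then follows from the fact, recalled in Section~\ref{sec:Introduction}, that curves on $(S,0)$ with combinatorially equal minimal resolutions define topologically equivalent links.

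The principal obstacle is the distinguishability step: showing that two different admissible configurations --- possibly differing \emph{both} in the grafting location and in the tail $\Lambda$ --- cannot produce the same semigroup $\Gamma_C$. A coincidence of the end-multiplicity data alone is ruled out by the finite Cartan-matrix computation, but one must also exclude the possibility that a different grafting location is masked by a different tail; for this I would argue by monotonicity, controlling how the conductor of $\Gamma_C$ and its smallest generators grow as the graft is moved along the $E_8$ skeleton and the tail is prolonged, and showing that the ``$E_8$-footprint'' they leave in $\Gamma_C$ cannot be reproduced starting from a different component of the exceptional divisor, the unique exception being the $D_6$ versus $D_8$ pair. A secondary, more bookkeeping, point comes from the symmetric ends $D_1$ and $D_4$: grafting at $D_1$ (or $D_4$) promotes one of the remaining vertices to the role of ``end of the longest tail'', and one must check that the triples of end-multiplicities so obtained remain pairwise distinct and distinct from those arising from interior graftings --- which, unlike the $D_6/D_8$ case, turns out to hold.
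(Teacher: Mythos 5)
Your overall architecture coincides with the paper's: put $P_C(t)$ in its canonical product form, identify the component $D_{\sigma_0}$ of the intermediate resolution $\pi'$ from the smallest exponents occurring with exponent $-1$ (these being, up to the common factor $\ell=\w C\cdot D_{\sigma_0}$, entries of the minus inverse intersection matrix), and then recover the plane-branch tail. But at both of the load-bearing points the argument is a plan rather than a proof. For the first point, the ``finite explicit computation with the inverse $E_8$ Cartan matrix'' has to be performed, because it genuinely could fail and in one place almost does: since $\ell$ is unknown, only the \emph{ratios} of the small exponents are intrinsic, one must first prove that the two (or three) smallest exponents with a $-1$ are $\ell m_{\sigma_0 8}\le\ell m_{\sigma_0 1}\le\ell m_{\sigma_0 4}$ in that order (this uses $m_{i8}\le m_{i1}\le m_{ik}$ and a lower bound on the deadend multiplicities of the tail), and then that the ratio triples are pairwise distinct along the whole graph (Lemma~\ref{lemma:planar_graph}, checked via Figure~\ref{fig:ratios}); note that $m_2/m_1=5/3$ occurs simultaneously for $D_3$, $D_4$ and every inserted component between them, so two ratios do not suffice and the asserted distinctness is exactly what needs verification.

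The more serious gap is the recovery of the tail. The citation of \cite{FAOM} does not apply as stated: what you possess is the Poincar\'e series of the curve valuation on $\OO_{S,0}$, not the Poincar\'e series of the plane branch $(\w C,P)\subset(\XX',P)$. The exponents contributed by the tail are not the semigroup generators $\b\beta_j$ of that plane branch but shifted quantities of the form $N_{j-1}\cdots N_1\,m_{\delta_g\sigma_0}+\b\beta_j$ (similarly for the rupture vertices), and in addition one must locate the vertex $\delta$ of the tail to which the $E_8$ skeleton is attached, which depends on the contact of $\w C$ with $D_{\sigma_0}$ (curvette, smooth tangent, transversal, maximal contact, intermediate tangency). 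Disentangling the $\b\beta_j$ from these shifted exponents and discriminating these five possibilities (via $\ell$ and $\mu=m_1-\ell m_{\sigma_0\sigma_0}$) is the bulk of the paper's proof and is absent from yours. Your closing paragraph concedes the point: the ``monotonicity'' argument for excluding a different grafting location masked by a different tail is not supplied, whereas in the paper no separate uniqueness argument is needed because the proof is an explicit algorithm reading the resolution off the series. The preliminary reductions ($P_C(t)=\sum_{n\in\Gamma_C}t^n$, the shape of the minimal embedded resolution, the matching of deadends and rupture vertices with denominator and numerator factors) are all correct; the proof stops exactly where the work begins.
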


\begin{proof}
 Let $\pi:(\XX,\DD)\to(S,0)$ be the minimal embedded resolution of the curve $(C,0)$ and let
 $\pi':(\XX',\DD')\to(S,0)$ be the resolution of the surface singularity $(S,0)$ described in
 Section~\ref{sec:Resolution}, $\DD'=\bigcup\limits_{\sigma\in\Gamma_0}D_{\sigma}$. The resolution
 $\pi'$ either is the minimal resolution of the singularity $(S,0)$, or is obtained from the minimal one
 by blow-ups points inbetween two particular components of it. In the latter case
the dual graph of the
 resolution $\pi'$ is obtained from the one of the minimal resolution by inserting several vertices
 inbetween two neighbouring vertices $D_i$ and $D_j$ of the $E_8$-graph. Each component $D_{\sigma_0}$
 ($\sigma_0\in\Gamma_0$) inbetween $D_i$ and $D_j$ is characterized by a pair of coprime positive integers
 $s_1$ and $s_2$ so that one has $m_{k\sigma_0}=s_1m_{ki}+s_2m_{kj}$ for $1\le k\le 8$.

 \begin{remark}
  If one knows the numbers $i$ and $j$ of the components and the ratio $s_1/s_j$ for the component $D_{\sigma_0}$,
  one knows the resolution $\pi'$ itself.
 \end{remark}

 Either the resolution $\pi$ coincides with the resolution $\pi'$ (then $C$ is a curvette at a component
 $D_{\sigma_0}$ of $\DD'$) or it is obtained from $\pi'$ by a sequence of blow-ups starting at a smooth point
 of a component $D_{\sigma_0}$.

 The matrix $(m_{ij})=-(D_i\cdot D_j)^{-1}$ (minus the inverse of the intersection matrix of the components
 of the minimal resolution of the $E_8$ surface singularity) is equal to
 \begin{equation}\label{eq:inverse_matrix}
 \begin{pmatrix}
  4 & 7 & 10 & 5 & 8 & 6 & 4 & 2\\
  7 & 14 & 20 & 10 & 16 & 12 & 8 & 4\\
  10 & 20 & 30 & 15 & 24 & 18 & 12 & 6\\
  5 & 10 & 15 & 8 & 12 & 9 & 6 & 3\\
  8 & 16 & 24 & 12 & 20 & 15 & 10 & 5\\
  6 & 12 & 18 & 9 & 15 & 12 & 8 & 4\\
  4 & 8 & 12 & 6 & 10 & 8 & 6 & 3\\
  2 & 4 & 6 & 3 & 5 & 4 & 3 & 2
 \end{pmatrix}
 \end{equation}
 For $1\le i\le 8$, let $Q_i=(m_{i8}, m_{i1}, m_{i4})\in \R^3$. For $i$, $j$ such that the components
 $D_i$ and $D_j$ intersect, let $I_{ij}$ be the segment between the points $Q_i$ and $Q_j$ (that is, the set
 of points of the form $\lambda Q_i+(1-\lambda)Q_j$ with $0\le\lambda\le 1$). Let us consider the
 one-dimensional simplicial complex $G$ in $\R^3$ with the vertices $Q_i$ and the edges $I_{ij}$.
 (As an abstract graph $G$ is isomorphic to the $E_8$-graph.)

 \begin{lemma}\label{lemma:planar_graph}
  The image of the graph $G$ in $\R\P^2$ (under the natural quotient map $\R^3\setminus\{0\}\to\R\P^2$)
  is a planar graph, i.~e. images in $\R\P^2$ of different points of $G$ are different.
 \end{lemma}

A proof is obtained by drawing the image of $G$ in an affine chart of $\R\P^2$:
Figure~\ref{fig:ratios}.
\begin{figure}
$$
\unitlength=0.75mm
\begin{picture}(140.00,100.00)(0,0)
\thinlines
\put(10,10){\vector(1,0){120}}
\put(10,10){\vector(0,1){80}}
\put(70,60){\line(1,0){40}}
\put(70,60){\line(0,1){10}}
\put(60,35){\line(2,5){10}}
\put(133,10){{\scriptsize$m_{k8}/m_{k1}$}}
\put(13,90){{\scriptsize$m_{k4}/m_{k1}$}}
\put(10,10){\line(0,-1){3}}
\put(7,2){{\scriptsize$0.0$}}
\put(60,10){\line(0,-1){3}}
\put(57,2){{\scriptsize$0.5$}}
\put(110,10){\line(0,-1){3}}
\put(107,2){{\scriptsize$1.0$}}
\put(10,10){\line(-1,0){3}}
\put(-2,8){{\scriptsize$1.0$}}
\put(10,60){\line(-1,0){3}}
\put(-2,58){{\scriptsize$1.5$}}
\put(10,80){\line(-1,0){3}}
\put(-2,78){{\scriptsize$1.7$}}
\put(60,35){\circle*{1.5}}
\put(55,33){{\scriptsize$1$}}
\put(67.15,52.85){\circle*{1.5}}
\put(62.15,50.85){{\scriptsize$2$}}
\put(70,60){\circle*{1.5}}
\put(65,58){{\scriptsize$3$}}
\put(70,70){\circle*{1.5}}
\put(65,68){{\scriptsize$4$}}
\put(72.5,60){\circle*{1.5}}
\put(70.5,62){{\scriptsize$5$}}
\put(76.67,60){\circle*{1.5}}
\put(74.65,62){{\scriptsize$6$}}
\put(85,60){\circle*{1.5}}
\put(83,62){{\scriptsize$7$}}
\put(110,60){\circle*{1.5}}
\put(108,62){{\scriptsize$8$}}
\end{picture}
$$
\caption{The graph $G$.}
\label{fig:ratios}
\end{figure}
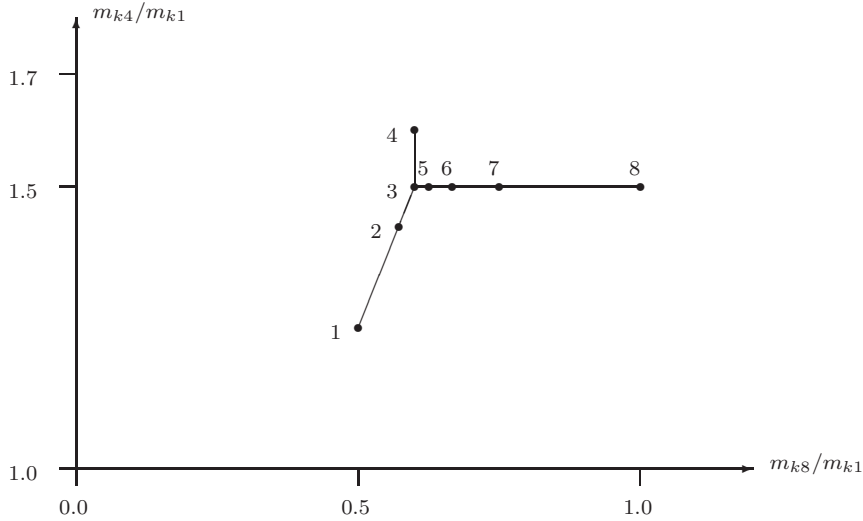

\begin{remark}
 One can see that the graph embedded into the projective plane consists of straight lines in between the rupture
 points and the deadends. This is a general property for the image in the projective
space of the dual resolution graph of
 a surface singularity under the map which sends a vertex $\sigma$ to the ratio of the ``multiplicities''
 $m_{\sigma_i\sigma}$ (that is of the elements of the minus inverse of the intersection matrix) for deadends
 $\sigma_i$ of the graph.
\end{remark}

 Lemma~\ref{lemma:planar_graph} says that the ratios of the three coordinates of different points
 of $G$ never coincide.

 \begin{lemma}\label{lemma:curveD'}
  The Poincar\'e series $P_C(t)$ of the curve $C$ determines the resolution $\pi'$ and the component
  $D_{\sigma_0}$ in $\DD'$ intersecting the strict transform of the curve $C$.
 \end{lemma}

 \begin{proof}
 Let us write the Poincar\'e series $P_C(t)$ in the form
 \begin{equation}\label{eq:curve_binomial_product}
  \prod_{i=1}^q(1-t^{m_i})^{-1}\prod_{m> 0}(1-t^{m})^{s_m},
 \end{equation}
 where $m_1\le m_2\le \ldots\le m_q$ (thus the first product may have repeated factors) and in the second product
 the (integer) exponents $s_m$ are non-negative and are equal to zero for $m=m_i$, $i=1,\ldots,q$. Let us recall
 that the representation of the Poincar\'e series in this form is unique. This follows from the fact that any
 series from $1+t\Z[[t]]$ has a unique representation in the form of a (finite or infinite) product
 $\prod\limits_{m=1}^{\infty}(1-t^m)^{s_m}$ with integer exponents $s_m$.

 The representation~(\ref{eq:curve_binomial_product}) may not have less than two binomial factors with the
 exponent $(-1)$. As a rational function the Poincar\'e series $P_C(t)$ has the form of a polynomial (in fact
 the Alexander polynomial of the corresponding algebraic link) divided by $(1-t)$. One cannot have a degree of
 the binomial $(1-t)$ in~(\ref{eq:curve_binomial_product}) since all the entries of the
 matrix~(\ref{eq:inverse_matrix}) are greater than 1. If there is one factor $(1-t^{m_1})^{-1}$ with $m_1>1$,
 its poles at the degree $m_1$ roots of $1$ different from 1 have to be zeroes of the second product
 $\prod\limits_{m}(1-t^{m})^{s_m}$. This implies that this product contains a binomial $(1-t^{km_1})$ with
 a non-zero exponent $s_{km_1}$ and therefore the series itself is a polynomial.

 If the strict transform $\widetilde{C}$ (in the space $\XX'$ of the resolution $\pi'$) intersects the component
 $D_1$, then the ratio $m_2/m_1$ is greater than $2$. This follows from the fact that
the exponent $m_1$ is equal to
 $\ell m_{18}=2\ell$, where $\ell$ is the intersection number $\widetilde{C}\cdot D_1$, and the exponent $m_2$
 is either equal to $\ell m_{14}=5\ell$ or corresponds to a divisor born from $D_1$ under some blow-ups. In the
 last case it is greater that $\ell m_{11}=4\ell$. The ratio $m_2/m_1>2$ cannot be met in other cases: see below.

 If the strict transform $\widetilde{C}$ does not intersect the component $D_1$ (and intersects a component
 $D_{\sigma_0}$ of the exceptional divisor $\DD$), one has $m_1=\ell m_{\sigma_0 8}$, $m_2=\ell m_{\sigma_0 1}$,
 where $\ell=\widetilde{C}\cdot D_{\sigma_0}$. (This follows from the fact that in the
 matrix~(\ref{eq:inverse_matrix}) one has $m_{i8}\le m_{i1}\le m_{ik}$ for $1<k<8$
and all $i$.) From Figure~\ref{fig:ratios}
 one can see that $1<m_2/m_1<2$. Moreover, if $m_2/m_1\ne 5/3$, the ratio $m_2/m_1$ determines the component
 $D_{\sigma_0}$ (that is the components $D_i$ and $D_j$ from the minimal resolution graph of the $E_8$ surface
 singularity and the corresponding ratio $s_1/s_2$). The ratio $m_2/m_1$ is equal to $5/3$ if and only if the
 strict transform $\widetilde{C}$ intersects either $D_3$, or $D_4$, or a component inbetween $D_3$ and $D_4$
 in $\DD'$. In this case if the strict transform $\widetilde{C}$ does not intersect $D_4$, there are at least
 three binomial factors with the exponent $-1$ and one has $m_1=\ell m_{\sigma_0 8}$, $m_2=\ell m_{\sigma_0 1}$,
 $m_3=\ell m_{\sigma_0 4}$. Lemma~\ref{lemma:planar_graph} implies that the ratio $m_3:m_2:m_1$ determines the
 component $D_{\sigma_0}$. If there are less than three binomial factors with the exponent $-1$ or the ratio
 $m_3:m_2:m_1$ is different from $8:5:3$, the strict transform $\widetilde{C}$ intersects the component $D_4$.
 \end{proof}

 \begin{remark}
 One can avoid arguments from the first paragraph of the proof formulating the criterium for
 the strict transform $\widetilde{C}$ not to intersect the component $D_1$ in the following form:
 this holds if and only if the Poincar\'e series $P_C(t)$ contains at least two binomial factors with
 the exponent $(-1)$ and $m_2/m_1\le 2$. The formal negation says that the strict transform $\widetilde{C}$
 intersects the component $D_1$ if and only if either the Poincar\'e series $P_C(t)$ contains less than two
 binomial factors with the exponent $(-1)$ or $m_2/m_1> 2$. The first option does not take place, but
 this does not contradict the statement. This sort of formulation can be useful for the proof of a version
 of this Lemma for a divisorial valuation in Section~\ref{sec:One_divisorial}.
\end{remark}

Lemma~\ref{lemma:curveD'} says that the Poincar\'e series $P_C(t)$ determines the (minimal) modification
$\pi':(\XX',\DD')\to(S,0)$ and the component $D_{\sigma_0}$ containing the point $P=\widetilde{C}\cap\DD'$
(a smooth point of $\DD'$). In particular, one knows the multiplicity
$m_{\sigma_0\sigma_0}$.
In terms of the decomposition~(\ref{eq:curve_binomial_product}) one has $m_1=\ell m_{8\sigma_0}$ and therefore
the intersection number $\ell=\widetilde{C}\cdot D_{\sigma_0}$ is determined by the Poincar\'e series $P_C(t)$.
Let
$$
D(t):=(1-t^{\ell m_{1\sigma_0}})^{-1}(1-t^{\ell m_{4\sigma_0}})^{-1}(1-t^{\ell m_{8\sigma_0}})^{-1}
(1-t^{\ell m_{3\sigma_0}})\,.
$$

Let $\pi'':(\XX,\DD'')\to(\XX',P)$ be the minimal embedded resolution of the germ
$(\widetilde{C}\cup D_{\sigma_0}, P)\subset (\XX',\DD')$,
$\DD''=\bigcup_{\delta\in\Gamma_1}F_{\delta}$.
The dual graph $\Gamma_1$ (with an arrow representing the strict transform of the curve $\w C$ by $\pi''$)
is of the form shown on Figure~\ref{fig:Felix1}.
\begin{figure}[h]
$$
\unitlength=1.00mm
\begin{picture}(80.00,20.00)(-10,13)
\thinlines
\put(-5,30){\line(1,0){41}}
\put(44,30){\line(1,0){16}} \put(38,30){\circle*{0.5}}
\put(40,30){\circle*{0.5}} \put(42,30){\circle*{0.5}}
\put(30,10){\line(0,1){20}} \put(50,20){\line(0,1){10}}
\put(60,10){\line(0,1){20}} \put(10,15){\line(0,1){15}}
\put(60,30){\vector(1,1){5}}
\put(20,30){\circle*{1}} \put(30,30){\circle*{1}}
\put(50,30){\circle*{1}} \put(60,30){\circle*{1}}

\put(30,20){\circle*{1}}
\put(60,25){\circle*{1}}
\put(60,20){\circle*{1}}
\put(60,15){\circle*{1}}
\put(10,30){\circle*{1}}
\put(30,10){\circle*{1}} \put(50,20){\circle*{1}}
\put(60,10){\circle*{1}}
\put(-5,30){\circle*{1}}
\put(0,30){\circle*{1}} \put(5,30){\circle*{1}}
\put(15,30){\circle*{1}} \put(25,30){\circle*{1}}
\put(35,30){\circle*{1}} \put(45,30){\circle*{1}}
\put(55,30){\circle*{1}} \put(10,25){\circle*{1}}
\put(10,20){\circle*{1}} \put(10,15){\circle*{1}}
\put(30,25){\circle*{1}} \put(30,15){\circle*{1}}
\put(35,30){\circle*{1}} \put(-9,25){$\tau_0$}
\put(11.5,14){$\tau_1$}
\put(31.5,9){$\tau_2$}
\put(61.5,9){$\tau_g$}
\put(9,32){$\delta_1$} \put(29,32){$\delta_2$}
\put(57.5,33){$\delta_g$}
\end{picture}
$$
\caption{The graph $\Gamma_1$.}
\label{fig:Felix1}
\end{figure}
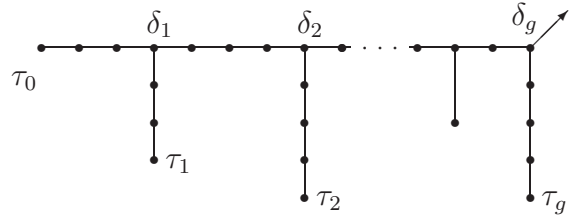
Here $\tau_0$ marks the divisor corresponding to the first blow-up at the point $P$,
$g$ is the number of Puiseux pairs of the curve $\widetilde{C}$ and $F_{\delta_g}$ is the last
component of $\DD''$ (i.~e., the component with self-intersection $-1$): the strict transform of $\w C$
intersects $F_{\delta_g}$; $\tau_i$, $i=0,1,\ldots, g$, are the deadends of the graph
$\Gamma_1$.
Let $(\ell_{\tau_i},P)\subset(\XX',P)$ be a curvette at the component
$D_{\tau_i}$ and let $\b \beta_{i}$ be the intersection number $\w C\cdot\ell_{\tau_i}$.
One has $\b{\beta}_0 <\b{\beta}_1 <\cdots < \b{\beta}_g$ and
$\{\b{\beta}_i\vert i=0,1,\ldots, g\}$ is the minimal set of generators of the semigroup of values of
the curve germ $(\w C,P)$ (in particular $\b{\beta}_0$ is the multuiplicity of $\w C$). Moreover,
the sequence $\seq{\b{\beta}}0g$ determines the graph $\Gamma_1$.

Let
$$
Q(t) := \prod_{j=0}^s (1-t^{m_{\delta_g \tau_j}})^{-1} \ \cdot\
\prod_{j=1}^s (1-t^{m_{\delta_g \delta_j}}) \; .
$$
If $\w C$ is a curvette at the component $D_{\sigma_0}$ one defines $Q(t)$ to be equal to $1$ and
$\Gamma_1$ is only an arrow without any vertex. If $\w C$ is smooth but
$\w C\cdot D_{\sigma_0} = \ell > 1$ (that is $\w C$ is tangent to $D_{\sigma_0}$), we put
$Q(t) = (1-t^{m_{\delta_g\tau_0}})^{-1}$ (the dual graph is shown in Figure~\ref{fig:Case_2}).
\begin{figure}[h]
$$
\unitlength=1.00mm
\begin{picture}(60.00,25.00)(0,0)
\thinlines
\put(5,10){\circle*{1}}
\put(2,8){$1$}
\put(5,10){\line(1,0){45}}
\put(50,10){\circle*{1}}
\put(51,8){$8$}

\put(15,10){\line(0,-1){10}}
\put(15,0){\circle*{1}}
\put(12,0){$4$}

\put(30,10){\circle*{1}}
\put(27,7){$\sigma_0$}
\put(30,10){\line(0,1){10}}
\put(30,20){\circle*{1}}

\put(30,20){\vector(1,1){5}}
\put(27,21){$\delta$}
\put(30,20){\line(-1,0){20}}
\put(10,20){\circle*{1}}
\put(9,15){$\tau_0$}
\put(15,20){\circle*{1}}
\end{picture}
$$
\caption{Case 2}
\label{fig:Case_2}
\end{figure}
Pay attention that
\begin{gather*}
m_{\delta_g\tau_0} < m_{\delta_g\tau_1} < \ldots < m_{\delta_g\tau_g}\,,
\\
m_{\delta_g\tau_j} < m_{\delta_g\delta_j} < m_{\delta_{g} \tau_{j+1}}\,.
\end{gather*}

The modification $\pi = \pi'\circ \pi'' : (\XX',\DD)\to (S,0)$ is the minimal resolution of the
curve $C$. The dual graph of this resolution (i.~e., the one of the modification
$\pi$ with an arrow corresponding to $C$ added) is obtained by joining the graphs $\Gamma_0$ and $\Gamma_1$
by an edge between $\sigma_0$ and a vertex $\delta\in \Gamma_1$.
If $\w C$ is a curvette at the component $D_{\sigma_0}$, the graph $\Gamma$ is obtained from the graph
$\Gamma_0$ by attaching an arrow to the vertex $\sigma_0\in \Gamma_0$. This case (we refer to it as {\bf Case 1}
in the sequel) is characterized by the condition $\ell = 1$. One has
$$
P_C (t) = D(t) (1-t^{m_{\sigma_0 \sigma_0}}) \; .
$$

We will consider several cases corresponding to essentially different possibilities for the position of
the vertex $\delta$ in $\Gamma_1$.
Taking into account the fact that $m_{\delta_g\sigma} = \ell m_{\sigma\sigma}$ for all $\sigma\in \Gamma_0$,
one can show that in all these cases one has
$$
P_C (t) = D(t) Q(t) (1-t^{m_{\delta_g \delta}})(1-t^{m_{\delta_g\sigma_0}})
$$
(see the discussion of the cases below).
The exponent $m_{\delta_g \sigma_0}$ is equal to $\ell m_{\sigma_0 \sigma_0}$ and
the series $D(t)$ is known. Therefore the remaining part of the proof consists in the computation of $\delta$,
$m_{\delta_g\delta}$ and $\seq{\b{\beta}}0g$ from the (known) series
$B(t) = Q(t) (1-t^{m_{\delta_g\delta}})$.

Let us write the series $B(t)$ in the form
$$
B(t) = \prod_{k=1}^{r} (1-t^{m_k})^{-1} \ \cdot \ \prod_{k=1}^{r} (1-t^{n_k})
$$
with $m_1<\cdots < m_r$ and $n_1<\cdots < n_r$. In the following $\mu$ will denote
the integer $\mu = m_1-m_{\delta_g\sigma_0}$.

{\bf Case 2.}
The curve $\w C$ is smooth but $\ell >1$. In this case one has
$\delta = \delta_g$, $m_{\delta_g\tau_0} = \ell m_{\sigma_0\sigma_0} +1$ and
$m_{\delta_g\delta} = \ell (m_{\delta_g\sigma_0} +1 ) > m_{\delta_g\tau_0}$.
The graph $\Gamma_1$ is shown in Figure~\ref{fig:Case_2} and the series $B(t)$ is equal to
$$
B(t) = (1-t^{m_{\delta_g\tau_0}})^{-1} \cdot
(1-t^{\ell m_{\delta_g\tau_0}})\,.
$$
Note that in this case $\mu=1$.

{\bf Case 3.} The curve $\w C$ is non smooth but it is transversal to the component $D_{\sigma_0}$.
In this case $\delta = \tau_0$ and $m_{\delta_g\delta}= m_{\delta_g\tau_0}$.
The binomial factor $(1-t^{m_{\delta_g\delta}}) = (1-t^{m_{\delta_g\tau_0}})$
does not participate in the decomposition of
$B(t)$. The integer $\ell=\b{\beta}_0$ coincides with the multiplicity of $\w C$ at $P$ and
one has
$$
m_1 = m_{\delta_g\tau_1} = \ell m_{\sigma_0\sigma_0} + \b{\beta}_1 =
m_{\delta_g\sigma_0}+ \b{\beta}_1 \; .
$$
For $\mu = m_1-m_{\delta_g,\sigma_0}$ one has $\mu > \ell > 1$.
Comparing with all the other cases below, one can see that these conditions characterize the case 3.
One has $\b{\beta}_0 = \ell$, $\b{\beta}_1 = \mu$ and the graph $\Gamma$ is shown on Figure~\ref{fig:Case_3}.
\begin{figure}[h]
$$
\unitlength=1.00mm
\begin{picture}(80.00,35.00)(0,0)
\thinlines
\put(5,10){\circle*{1}}
\put(1,8){$1$}
\put(5,10){\line(1,0){40}}
\put(45,10){\circle*{1}}
\put(47,8){$8$}

\put(15,10){\line(0,-1){10}}
\put(15,0){\circle*{1}}
\put(11,0){$4$}

\put(30,10){\circle*{1}}
\put(27,7){$\sigma_0$}
\put(30,10){\line(0,1){15}}
\put(30,25){\circle*{1}}

%The curve part starts at (30,25)

\put(17,23){$\delta=\tau_0$}
\put(30,25){\line(1,0){20}}
\put(40,25){\circle*{1}}
\put(38,27){$\delta_1$}
\put(40,25){\line(0,-1){10}}
\put(42,13){$\tau_1$}

\put(40,15){\circle*{1}}
\put(52,25){\circle*{0.5}}
\put(54,25){\circle*{0.5}}
\put(56,25){\circle*{0.5}}
\put(60,25){\line(1,0){15}}
\put(65,25){\circle*{1}}
\put(65,25){\line(0,-1){10}}
\put(65,15){\circle*{1}}
\put(75,25){\circle*{1}}
\put(75,25){\vector(1,1){5}}
\put(71,27){$\delta_g$}
\put(75,25){\line(0,-1){15}}
\put(75,10){\circle*{1}}
\put(77,8){$\tau_g$}
\end{picture}
$$
\caption{Case 3}
\label{fig:Case_3}
\end{figure}
Simple computations using the well-known description of the resolution process of a plane curve
singularity give us
\begin{align*}
m_{\delta_g\tau_j} &=
N_{j-1}\cdots N_1 m_{\delta_g\sigma_0}+ \b{\beta}_j\,, \\
m_{\delta_g\delta_j} &= N_{j}\cdots N_1  m_{\delta_g\sigma_0}+ N_j \b{\beta}_j =
N_{j} m_{\delta_g\tau_j}
\end{align*}
for $j=1,\ldots, g$. Here $N_k = e_{k-1}/e_k$, where $e_k =\gcd (\seq{\b{\beta}}0k)$ for $k=0,1,\ldots, g$.
Note that $N_k$ depends only on the numbers $\seq{\b{\beta}}0k$.
The equations above permit to compute the sequence $\seq{\b{\beta}}0g$ starting from the (known) sequence
$\b{\beta}_0 < m_{\delta_g\tau_1} < \cdots < m_{\delta_g\tau_g}$.

The expression for $B(t)$ implies that $g=r$, $m_j = m_{\delta_g\tau_j}$ for $j\ge 1$.

{\bf Case 4.} The component $D_{\sigma_0}$ has the maximal contact with $\w C$, i.~e.,
$\ell = \w C\cdot D_{\sigma_0} = \b{\beta}_1$. In this case $\delta = \tau_1$ and
$m_{\delta_g\delta}= m_{\delta_g\tau_1}$. As a consequence the binomial factor
$(1-t^{m_{\delta_g\delta}}) = (1-t^{m_{\delta_g\tau_1}})$ does not participate in
the decomposition of the series $B(t)$.
In this case one has
$$
m_1 = m_{\delta_g\tau_0} = \ell m_{\sigma_0\sigma_0} + \b{\beta}_0 = m_{\delta_g\sigma_0}+ \b{\beta}_0 \; .
$$
This case is characterized by the conditions $\mu = m_1-m_{\delta_g\sigma_0} < \ell $ and $\mu$ does not
divide $\ell$. The graph $\Gamma$ is shown in Figure~\ref{fig:Case_4}.
\begin{figure}[h]
$$
\unitlength=1.00mm
\begin{picture}(80.00,35.00)(0,-5)
\thinlines
%The E8 part
\put(5,5){\circle*{1}}
\put(1,3){$1$}
\put(5,5){\line(1,0){40}}
\put(45,5){\circle*{1}}
\put(47,3){$8$}

\put(15,5){\line(0,-1){10}}
\put(15,-5){\circle*{1}}
\put(11,-5){$4$}

\put(30,5){\circle*{1}}
\put(27,2){$\sigma_0$}
\put(30,5){\line(0,1){20}}
\put(30,25){\circle*{1}}

%The curve part starts at (30,25)

\put(30,25){\line(-1,0){15}}
\put(15,25){\circle*{1}}
\put(10,24){$\tau_0$}
\put(30,13){\circle*{1}}
\put(17,12){$\delta=\tau_1$}
\put(28,27){$\delta_1$}

\put(30,25){\line(1,0){20}}
\put(45,25){\circle*{1}}
\put(43,27){$\delta_2$}
\put(45,25){\line(0,-1){10}}
\put(47,13){$\tau_2$}
\put(45,15){\circle*{1}}

\put(52,25){\circle*{0.5}}
\put(54,25){\circle*{0.5}}
\put(56,25){\circle*{0.5}}
\put(60,25){\line(1,0){15}}
\put(65,25){\circle*{1}}
\put(65,25){\line(0,-1){10}}
\put(65,15){\circle*{1}}
\put(75,25){\circle*{1}}
\put(75,25){\vector(1,1){5}}
\put(71,27){$\delta_g$}
\put(75,25){\line(0,-1){15}}
\put(75,10){\circle*{1}}
\put(77,8){$\tau_g$}
\end{picture}
$$
\caption{Case 4}
\label{fig:Case_4}
\end{figure}
As in the previous case one has
\begin{align*}
m_{\delta_g\tau_j} &=
N_{j-1}\cdots N_2 \frac{\b{\beta}_1}{e_1}  m_{\delta_g\sigma_0}+ \b{\beta}_j\,, \\
m_{\delta_g\delta_j} &= N_{j}\cdots N_2 \frac{\b{\beta}_1}{e_1}
m_{\delta_g\sigma_0}+ N_j \b{\beta}_j
= N_{j} m_{\delta_g\tau_j}
\end{align*}
for $j=1,\ldots, g$. These equations permit to compute the sequence $\seq{\b{\beta}}0g$ starting from the sequence
$m_{\delta_g\tau_2} < \cdots < m_{\delta_g\tau_g}$ and the (known) integers $\mu$ and $\ell$.

The expression for $B(t)$ implies that $g=r$, $m_j = m_{\delta_g\tau_j}$ for $j\ge 2$,
$n_j = m_{\delta_g\delta_j}$ for $j\ge 1$. Thus the integers $\seq{m}1r$ permit to compute
the integers $\seq{\b{\beta}}0g$.

{\bf Case 5.}
The component $D_{\sigma_0}$ is tangent to $\w C$ but does not have the maximal contact with it.
This case is equivalent to the condition $\ell = k\cdot \b{\beta}_0$ for some
integer $k$ with
$1 < k <  \b{\beta}_1 / \b{\beta}_0$. The vertex $\delta$ is the $k$-th vertex of
the geodesic in $\Gamma_1$ from $\tau_0$ to $\delta_1$. In this case one has
$$
m_1 = m_{\delta_g\tau_0} = \ell m_{\sigma_0\sigma_0} + \b{\beta}_0 =
m_{\delta_g\sigma_0}+ \b{\beta}_0 \; .
$$
The case is characterized by the conditions
$\mu = m_1-m_{\delta_g\sigma_0} < \ell $ and $\mu (= \b{\beta}_0)$ divides
$\ell (= k\b{\beta}_0)$.
The graph $\Gamma$ is shown in Figure~\ref{fig:Case_5}.
\begin{figure}[h]
$$
\unitlength=1.00mm
\begin{picture}(80.00,35.00)(0,0)
\thinlines
%The E8 part
\put(5,10){\circle*{1}}
\put(1,8){$1$}
\put(5,10){\line(1,0){40}}
\put(45,10){\circle*{1}}
\put(47,8){$8$}

\put(15,10){\line(0,-1){10}}
\put(15,0){\circle*{1}}
\put(11,0){$4$}

\put(30,10){\circle*{1}}
\put(27,7){$\sigma_0$}
\put(30,10){\line(0,1){15}}
\put(30,25){\circle*{1}}

%The curve part starts at (30,25)

\put(30,25){\line(-1,0){10}}
\put(20,25){\circle*{1}}
\put(15,24){$\tau_0$}
\put(28,27){$\delta$}

\put(30,25){\line(1,0){20}}
\put(40,25){\circle*{1}}
\put(38,27){$\delta_1$}
\put(40,25){\line(0,-1){10}}
\put(42,13){$\tau_1$}

\put(40,15){\circle*{1}}
\put(52,25){\circle*{0.5}}
\put(54,25){\circle*{0.5}}
\put(56,25){\circle*{0.5}}
\put(60,25){\line(1,0){15}}
\put(65,25){\circle*{1}}
\put(65,25){\line(0,-1){10}}
\put(65,15){\circle*{1}}
\put(75,25){\circle*{1}}
\put(75,25){\vector(1,1){5}}
\put(71,27){$\delta_g$}
\put(75,25){\line(0,-1){15}}
\put(75,10){\circle*{1}}
\put(77,8){$\tau_g$}
\end{picture}
$$
\caption{Case 5}
\label{fig:Case_5}
\end{figure}
As in the previous case one has
\begin{align*}
m_{\delta_g\delta} &= k m_{\delta_g\tau_0} = k(m_{\delta_g\sigma_0}+\b{\beta}_0)\,,\\
m_{\delta_g\tau_j} &= N_{j-1}\cdots N_1 k
m_{\delta_g\sigma_0}+ \b{\beta}_j\,,
\\
m_{\delta_g\delta_j} &= N_{j}\cdots N_1 k
m_{\delta_g\sigma_0}+ N_j \b{\beta}_j
= N_{j} m_{\delta_g\tau_j}
\end{align*}
for $j=1,\ldots, g$.
The expression for $B(t)$ implies that $r=g+1$, $m_j = m_{\delta_g\tau_j}$ for $j\ge 0$;
$n_1 = m_{\delta_g\delta}$ and $n_j = m_{\delta_g\delta_{j-1}}$ for $j\ge 2$. Thus the integers
$\seq{m}1r$ permit to compute the intergers $\seq{\b{\beta}}0g$.

\begin{remark}
The characterizations of the different possibilities for the location of the vertex $\delta$ are already made
in the course of the analysis of the different cases above. For a convenience of understanding let us
summarize these characterizations. Let $\ell = \w C\cdot D_{\sigma_0}$ be the intersection multiplicity
between $\w C$ and $D_{\sigma_0}$ and let $\mu:= m_1 - m_{\delta_g\sigma_0}$. Then one has
$$
\begin{array}{cccccc}
Case & \ell & \mu & \b{\beta_0} &  \b{\beta}_1 & \\
1 & 1 &  & 1 & \\
2 & > 1 & 1 & 1 &  \\
3 & > 1 & \mu > \ell & \ell &  \mu \\
4 & > 1 & \mu <  \ell \mbox{ and }   \mu \not| \ell \ & \mu & \ell \\
5 & > 1 & \mu <\ell \mbox{ and } \mu | \ell & \mu &   \\
\end{array}
$$
\end{remark}
\end{proof}

\begin{remark}
 The Poincar\'e series of a curve (reducible or irreducible) on the $E_8$ surface singularity being either
 the Alexander polynomial of the corresponding link or the Alexander polynomial divided by $1-t$, is a topological
 invariant of the curve. Therefore Theorem~\ref{theo:One_curve} means that for $i\ne j$, $i,j\le 7$, all arcs from
 the component $\calE_i$ are not topologically equivalent to arcs from the component $\calE_j$ (except those from
 the intersection of the components).
\end{remark}

%%%%%%%%%%%%%%%%%%%%%%%%%%%%%%%%%%%%%%%%%%%%%%%%%% Section 4
\section{The Poincar\'e polynomial of one divisorial valuation and the topological type}\label{sec:One_divisorial}
%%%%%%%%%%%%%%%%%%%%%%%%%%%%%%%%%%%%%%%%%%%%%%%%%%
Let $v$ be a divisorial valuation defined by a component $D_{\sigma^*}$ of the exceptional divisor $\DD$ of a
resolution $\pi:(\XX,\DD)\to(S,0)$ of the $E_8$ surface singularity $(S,0)$.
We assume that $\pi$ is the minimal
modification containing the component defining the valuation, i.~e., the minimal resolution of the valuation $v$.
Let $\pi':(\XX',\DD')\to(S,0)$ be the resolution of the surface described above and let $D_{\sigma_0}$ be the
component of the exceptional divisor $\DD'$ which either coincides with $D_{\sigma^*}$ or is such that $D_{\sigma^*}$
is born by a sequence of blow-ups starting at a point of $D_{\sigma_0}$ (smooth in $\DD'$). Assume that
$\sigma_0\ne 8$. This means that the component $D_{\sigma^*}$ does not originate from a point of $D_8$
(smooth in $\DD'$).

\begin{theorem}\label{theo:one_divisorial}
 In the described situation the
 Poincar\'e series $P_v(t)$ of the divisorial valuation $v$ determines the
 combinatorial type of the minimal resolution of the valuation.
\end{theorem}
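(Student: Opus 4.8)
The plan is to run the argument of Theorem~\ref{theo:One_curve} almost verbatim, with the strict transform $\w C$ replaced by the divisor $D_{\sigma^*}$ itself. By Equation~(\ref{eq:ACampo-divis}) one has
$$
P_v(t)=\prod_{\sigma\in\Gamma}\left(1-t^{m_{\sigma\sigma^*}}\right)^{-\chi(\bD_\sigma)},
$$
where $\Gamma$ is the dual graph of the minimal resolution $\pi$ of $v$. As in Section~\ref{sec:Resolution}, $\Gamma$ is obtained from the ``three--tails'' graph $\Gamma_0$ of $\pi'$ by attaching, at the vertex $\sigma_0$, a tree $\Gamma_1$; this tree is the dual graph of the minimal resolution of the germ $(D_{\sigma_0},P)\subset(\XX',P)$ together with the infinitely near points producing $D_{\sigma^*}$, and is therefore a purely plane object (here $P=D_{\sigma_0}\cap D_{\sigma^*}$ in $\XX'$ when $\sigma_0\ne\sigma^*$, and $\Gamma_1$ reduces to the single vertex $\sigma_0$ otherwise). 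Since $-\chi(\bD_\sigma)$ equals $+1$ at a rupture vertex, $0$ at a valency--$2$ vertex and $-1$ at a deadend of $\Gamma$, the only change with respect to the curve case is the behaviour of the vertex $D_{\sigma^*}$: it contributes a factor $(1-t^{m_{\sigma^*\sigma^*}})^{-1}$ when $D_{\sigma^*}$ is born at a free point (so that it is a deadend of $\Gamma$) and contributes nothing when $D_{\sigma^*}$ is born at a satellite point (so that it has valency $2$). As in the proof of Lemma~\ref{lemma:curveD'} one also has $m_{\sigma^*\sigma}=\ell\,m_{\sigma_0\sigma}$ for every $\sigma\in\Gamma_0$, where $\ell$ is the intersection number with $D_{\sigma_0}$ of (the strict transform of) a curvette at $D_{\sigma^*}$.

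First I would recover $\pi'$, the component $D_{\sigma_0}$ and the integer $\ell$, i.e.\ prove the analogue of Lemma~\ref{lemma:curveD'}. Write $P_v(t)$ in the form~(\ref{eq:curve_binomial_product}), with the binomial factors of exponent $-1$ carrying exponents $m_1\le m_2\le\cdots$. The deadends of $\Gamma$ are: the avatar of $D_8$ --- it is precisely the hypothesis $\sigma_0\ne 8$ that keeps $D_8$ a leaf of $\Gamma$ and prevents it from being absorbed into $\Gamma_1$ --- the avatar(s) of $D_1$ and/or $D_4$ distinct from $\sigma_0$, and the deadends $\tau_j$ lying in $\Gamma_1$ (together with $D_{\sigma^*}$ in the free case). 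Using the inequalities $m_{i8}\le m_{i1}\le m_{ik}$ for $1<k<8$ read off from~(\ref{eq:inverse_matrix}), together with $m_{\sigma_0\sigma_0}\ge m_{\sigma_0 1}\ge m_{\sigma_0 8}$ and the fact that every exponent coming from a $\tau_j$ or from $D_{\sigma^*}$ is a positive multiple of $m_{\sigma_0\sigma_0}$ increased by a positive semigroup element, one shows $m_1=\ell\,m_{\sigma_0 8}$ and $m_2=\ell\,m_{\sigma_0 1}$ (with $m_3=\ell\,m_{\sigma_0 4}$ in the ambiguous case $m_2/m_1=5/3$, exactly as in the curve case). By Lemma~\ref{lemma:planar_graph} the ratios $m_1:m_2$ (resp.\ $m_1:m_2:m_3$) determine $D_{\sigma_0}$ and hence $\pi'$, and $\ell$ is read off from $m_1=\ell\,m_{\sigma_0 8}$. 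Dividing $P_v(t)$ by the binomials coming from the vertices of $\Gamma_0$ --- which are now all known --- isolates a factor $B(t)$ carrying exactly the data of $\Gamma_1$.

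It remains to recover $\Gamma_1$ from $B(t)$. Since $\Gamma_1$ is the dual graph of the minimal resolution of a divisorial valuation on the smooth surface germ $(\XX',P)$, it is determined by the minimal system of generators $\b{\beta}_0<\b{\beta}_1<\cdots<\b{\beta}_g$ of the associated semigroup, together with the type (free or satellite) of the last blow--up and, in the free case, the length of the terminal chain --- data encoded in the multiplicity $m_{\sigma^*\sigma^*}$. These are extracted from $B(t)$ by the same mixture of unique factorisation into binomials, smallest--exponent comparisons, and the chain of inequalities $m_{\sigma^*\tau_j}<m_{\sigma^*\delta_j}<m_{\sigma^*\tau_{j+1}}$ used in Cases~2--5 of the proof of Theorem~\ref{theo:One_curve}: the recursions $m_{\sigma^*\tau_j}=N_{j-1}\cdots N_1\,\ell\,m_{\sigma_0\sigma_0}+\b{\beta}_j$, $m_{\sigma^*\delta_j}=N_j\,m_{\sigma^*\tau_j}$ (with $N_k=e_{k-1}/e_k$, $e_k=\gcd(\seq{\b{\beta}}0k)$) invert to express the $\b{\beta}_i$ in terms of the observed exponents, and the same divisibility criteria that separated ``Case~4'' from ``Case~5'' separate the satellite ending of $\Gamma_1$ from the free ending. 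Reconstructing $\b{\beta}_0,\ldots,\b{\beta}_g$ together with the terminal data reconstructs $\Gamma_1$, hence $\Gamma$, hence the combinatorial type of the minimal resolution of $v$.

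The main obstacle, relative to the curve case, is exactly that $D_{\sigma^*}$ need not contribute a binomial of exponent $-1$: when it is a satellite divisor it is ``invisible'' in the product~(\ref{eq:curve_binomial_product}), so it cannot be located directly among the smallest exponents and must instead be detected through the rupture vertices $\delta_j$ and the gap structure $m_{\sigma^*\tau_j}<m_{\sigma^*\delta_j}<m_{\sigma^*\tau_{j+1}}$; correspondingly one must carefully distinguish the two possible endings of $\Gamma_1$ and check that no two distinct trees $\Gamma_1$ produce the same $B(t)$. A secondary point, to be verified explicitly, is that the hypothesis $\sigma_0\ne 8$ is sharp, in the sense that dropping it breaks the identification $m_1=\ell\,m_{\sigma_0 8}$ and reproduces coincidences of the type $P_{\nu'}=P_{\nu''}$ exhibited in Example~2; this is also why the ``formal'' reformulation of the criterion for $\widetilde C$ not to meet $D_1$, pointed out in the remark after Lemma~\ref{lemma:curveD'}, is convenient here, since for some divisorial valuations $P_v(t)$ carries only two (or fewer relevant) binomial factors of exponent $-1$ and the criterion must survive that degeneration.
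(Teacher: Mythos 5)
Your proposal follows the paper's proof essentially step for step: the same analogue of Lemma~\ref{lemma:curveD'} (recovering $\pi'$, $D_{\sigma_0}$ and $\ell$ from $m_1=\ell m_{\sigma_0 8}$, $m_2=\ell m_{\sigma_0 1}$ and Lemma~\ref{lemma:planar_graph}), the same reduction to a series carrying only the data of $\Gamma_1$, and the same observation that the sole difference from Theorem~\ref{theo:One_curve} is the contribution of $D_{\sigma^*}$, which the paper packages as $P_v(t)=P_C(t)\,(1-t^{m_{\sigma^*\sigma^*}})^{-1}$ with $C$ the blow-down of a curvette at $D_{\sigma^*}$.

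The one step you do not carry out is precisely the step that is new in the divisorial case, and you flag it yourself as something still ``to be verified'': how to decide, from the series alone, whether $\sigma^*$ coincides with the satellite vertex $\delta_g$ (tail length $k=0$, so the factor $(1-t^{m_{\delta_g\delta_g}})$ of the curve case simply disappears) or is a free deadend at the end of a tail of length $k\ge 1$ (an extra factor $(1-t^{m_{\sigma^*\sigma^*}})^{-1}$ appears), and, in the latter case, how to compute $k$. Your suggestion that ``the same divisibility criteria that separated Case~4 from Case~5'' settle this is a gesture in the right direction (the paper's criterion is indeed gcd-based) but not the same test: the Case~4/Case~5 dichotomy compares $\mu$ with $\ell$ to locate the attaching vertex $\delta$, not the terminal vertex $\sigma^*$. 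The paper's actual criterion is: write $B_v(t)=\prod_{k=1}^{r}(1-t^{m_k})^{-1}\prod_{k=1}^{r-1}(1-t^{n_k})$; in Case~2 one has $k=m_2-n_1$; in Cases~3--5 set $\varepsilon_i=\gcd(\b{\beta}_0,\b{\beta}_1,m_1,\ldots,m_i)$, and then $k=0$ if and only if $\varepsilon_r<\varepsilon_{r-1}$ (the largest exponent $m_r$ then contributes a new semigroup generator $m_{\delta_g\tau_g}$), while otherwise $k=m_r-n_{r-1}$ and $m_{\sigma^*\sigma^*}=m_{\delta_g\delta_g}+k$. Supplying and justifying this test is what your sketch is missing; everything else matches the paper's argument.
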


\begin{proof}
One has the following analogue of Lemma~\ref{lemma:curveD'}.

 \begin{lemma}\label{lemma:divisorD'}
  The Poincar\'e series $P_v(t)$ of the divisorial valuation $v$ determines the resolution $\pi'$ and the component
  $D_{\sigma_0}$ in $\DD'$.
 \end{lemma}

 \begin{proof}
  The proof is essentially the same as of Lemma~\ref{lemma:curveD'}. The component $D_{\sigma_0}$ coincides with
  $D_1$ if and only if either the Poincar\'e series $P_v(t)$ contains less than two binomial factors with
  the exponent $(-1)$ or $m_2/m_1> 2$. (Again, as in the curve case, the first option does not take place,
  but it is easier to avoid a proof of that.) If $D_{\sigma_0}\ne D_1$, the Poincar\'e series $P_v(t)$ contains
  at least two binomial factors with the exponent $(-1)$ and one has $m_1=\ell m_{\sigma_0 8}$,
  $m_2=\ell m_{\sigma_0 1}$, where $\ell$ is the intersection number of the strict transform in $\XX'$ of a curvette
  at the component defining the valuation with the component $D_{\sigma_0}$. One has $1<m_2/m_1<2$.
  If $m_2/m_1\ne 5/3$, the ratio $m_2/m_1$ determines the component $D_{\sigma_0}$. The ratio $m_2/m_1$
  is equal to $5/3$ if and only if the component $D_{\sigma_0}$ either coincides with $D_3$, or coincides
  with $D_4$, or is produced by blow-ups inbetween $D_3$ and $D_4$. If $D_{\sigma_0}\ne D_4$, the Poincar\'e series
  contains at least three binomial factors with the exponent $-1$ and the ratio $m_3:m_2:m_1$ determines the
  component $D_{\sigma_0}$ (Lemma~\ref{lemma:planar_graph}). If the Poincar\'e series contains less than
  three binomial factors with the exponent $-1$ or the ratio $m_3:m_2:m_1$ is different from $8:5:3$, one has
  $D_{\sigma_0}=D_4$.
 \end{proof}

  The vertices $\sigma^*$ and $\sigma_0$ coincide if and only if
  $$
  P_v(t)=(1-t^{m_{1\sigma_0}})^{-1}(1-t^{m_{4\sigma_0}})^{-1}(1-t^{m_{8\sigma_0}})^{-1}(1-t^{m_{3\sigma_0}})\,.
  $$
  Let $\sigma^*\neq \sigma_0$ and let $\pi'': (\XX, \DD'')\to (\XX',P)$ be the minimal modification of $\XX'$
  containing the component $D_{\sigma^*}$ defining the valuation $v$ ($P\in D_{\sigma_0}$).
  Let $(\ell_{\sigma^*},P)\subset(\XX',P)$ be a curvette at the component $D_{\sigma^*}$.
  The dual graph of the modification $\pi''$ differs from the dual graph of the minimal resolution of
  the curve $\widetilde{C}=\ell_{\sigma^*}$ by a tail
  of length $k\ge 0$ attached to the vertex $\delta_g$ corresponding to the curve $\widetilde{C}$:
  see Figure~\ref{fig:Graph_for_a_divisor}.
\begin{figure}[h]
$$
\unitlength=1.00mm
\begin{picture}(80.00,20.00)(-10,13)
\thinlines
\put(-5,30){\line(1,0){41}}
\put(44,30){\line(1,0){31}} \put(38,30){\circle*{0.5}}
\put(40,30){\circle*{0.5}} \put(42,30){\circle*{0.5}}
\put(30,10){\line(0,1){20}} \put(50,20){\line(0,1){10}}
\put(60,10){\line(0,1){20}} \put(10,15){\line(0,1){15}}
\thinlines
\put(20,30){\circle*{1}} \put(30,30){\circle*{1}}
\put(50,30){\circle*{1}} \put(60,30){\circle*{1}}
\put(65,30){\circle*{1}} \put(70,30){\circle*{1}}
\put(75,30){\circle*{1}} \put(75,30){\circle{2}}

\put(30,20){\circle*{1}}
\put(60,25){\circle*{1}}
\put(60,20){\circle*{1}}
\put(60,15){\circle*{1}}
\put(10,30){\circle*{1}}
\put(30,10){\circle*{1}} \put(50,20){\circle*{1}}
\put(60,10){\circle*{1}}
\put(-5,30){\circle*{1}}
\put(0,30){\circle*{1}} \put(5,30){\circle*{1}}
\put(15,30){\circle*{1}} \put(25,30){\circle*{1}}
\put(35,30){\circle*{1}} \put(45,30){\circle*{1}}
\put(55,30){\circle*{1}} \put(10,25){\circle*{1}}
\put(10,20){\circle*{1}} \put(10,15){\circle*{1}}
\put(30,25){\circle*{1}} \put(30,15){\circle*{1}}
\put(35,30){\circle*{1}} \put(-9,25){$\tau_0$}
\put(11.5,14){$\tau_1$}
\put(31.5,9){$\tau_2$}
\put(61.5,9){$\tau_g$}
\put(9,32){$\delta_1$} \put(29,32){$\delta_2$}
\put(57.5,33){$\delta_g$}
\put(77,27){$\sigma^*$}
\end{picture}
$$
\caption{Resolution graph for a divisorial valuation.}
\label{fig:Graph_for_a_divisor}
\end{figure}
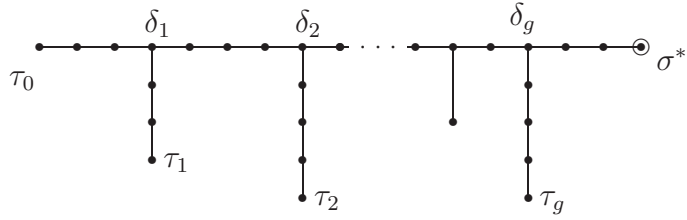
The only difference with the case of an irreducible curve treated in Theorem~\ref{theo:One_curve} above
(and applied to the curve $C=\pi'(\widetilde{C})$) consists in the necessity to find the length $k$ of the tail.
In the case $k=0$ the vertex $\sigma^*$ just coincides with $\delta_g$. In this case the Poincar\'e series
$P_v(t)$ does not contain the factor $(1-t^{m_{\delta_g\delta_g}})$ (since now there is no arrow at the vertex
$\delta_g$). If $k>0$ then, in order to obtain the Poincar\'e series $P_v(t)$, one has to add the factor
$(1-t^{m_{\sigma^*\sigma^*}})^{-1}$ to the decomposition of the Poincar\'e series $P_C(t)$.
In any case one has
$$
P_{v}(t) = P_C(t) (1-t^{m_{\sigma^*\sigma^*}})^{-1}.
$$
The intersection number $\ell = \w C\cdot D_{\sigma_0}$ can be determined from the Poincar\'e series $P_v(t)$
just in the same way as for a curve valuation.

As a consequence, the proof for a divisorial valuation almost repeats the one of Theorem~\ref{theo:One_curve}
for an irreducible curve with the additional duty to determine the component (vertex) $\sigma^*$
and the multiplicity $m_{\sigma^*\sigma^*}$. The analogue of the series $B(t)$ considered in the proof of
Theorem~\ref{theo:One_curve} is the series
$$
B_v(t) = B(t)(1-t^{m_{\sigma^*\sigma^*}})^{-1} =
Q(t)(1-t^{m_{\delta_g\delta}})(1-t^{m_{\sigma^*\sigma^*}})^{-1}.
$$
(Notice that for all $\sigma$ not in the tail one has $m_{\sigma^*\sigma}= m_{\delta_g\sigma}$.)
Let us write the series $B_v(t)$ in the form
$$
B_v (t) = \prod_{k=1}^{r} (1-t^{m_k})^{-1} \ \cdot \
\prod_{k=1}^{r-1} (1-t^{n_k})
$$
with $m_1<\cdots < m_r$ and $n_1<\cdots < n_{r-1}$.

{\bf Case 2.} The component of the exceptional divisor corresponding to the vertex $\sigma^*$ is produced by $k$
supplementary blow-ups at smooth points starting at a point of $\delta_g$.
If $k=0$, i.~e., if $\sigma^*=\delta_g = \delta$, the series $B_v(t)$ consists only of one term
$(1-t^{m_{\sigma^*\tau_0}})^{-1}$. Otherwise
$$
B_v(t) = (1-t^{m_1})^{-1} (1-t^{m_2})^{-1} (1-t^{n_1})
$$
and $k=m_2-n_1$.

{\bf Cases 3, 4, 5.}
From the proof of Theorem 1, it follows that the integers $\seq{m}1{i}$ alongside with
$\b{\beta}_0$ and $\b{\beta}_1$ permit to determine the numbers $\seq{\b{\beta}}0i$. For $i=1,\ldots, r$ let
$\varepsilon_i := \gcd(\b{\beta}_0, \b{\beta}_1, m_1,\ldots, m_i)$.
One can see that $k=0$ if and only if $\varepsilon_r < \varepsilon_{r-1}$. In this case $\sigma^* = \delta_g$.
Otherwise one has $k=m_r-n_{r-1}$, $m_{\sigma^*\sigma^*}=m_{\delta_g\delta_g}+k$.
 \end{proof}

%%%%%%%%%%%%%%%%%%%%%%%%%%%%%%%%%%%%%%%%%%%%%%%%%% Section 4
\section{The Poincar\'e polynomial of a collection of divisorial valuations and the topological type}
\label{sec:divisorial_collection}
%%%%%%%%%%%%%%%%%%%%%%%%%%%%%%%%%%%%%%%%%%%%%%%%%%
Let $v_i$, $i=1,\ldots,r$, be divisorial valuations defined by components of the exceptional divisor $\DD$ of a
resolution $\pi:(\XX,\DD)\to(S,0)$ of the $E_8$ surface singularity $(S,0)$.
We assume that $\pi$ is the minimal modification containing the components defining the valuations, i.~e.,
the minimal resolution of the collection $\{v_i\}$ of valuations.
The resolution $\pi$ can be obtained from the minimal resolution of the $E_8$ surface singularity $(S,0)$
by a sequence of blow-ups such that at first some of them are made at intersection points of the components
of the exceptional divisor (and produce a resolution $\pi':(\XX',\DD')\to(S,0)$ with a ``three tails''
dual graph) and later additional blow-ups do not touch the intersection points of the components of $\DD'$,
but start from smooth points of $\DD'$. Assume that the modification $\pi$ of $\pi'$ does not include
blow-ups of smooth (in $\DD'$) points of the component $D_8$.

\begin{theorem}\label{theo:divisorial_collection}
 In the described situation the Poincar\'e series $P_{\{v_i\}}(\tt)$, $\tt=(t_1, \ldots, t_r)$, of the collection
 $\{v_i\}$ of divisorial valuations determines the combinatorial type of the minimal resolution of collection.
\end{theorem}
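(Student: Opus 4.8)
The plan is to reduce the multivariable statement to repeated application of the single-valuation analysis of Theorems~\ref{theo:One_curve} and~\ref{theo:one_divisorial}, processing the branches of the resolution tree one at a time. First I would note that by formula~(\ref{eq:ACampo-divis}) the Poincar\'e series $P_{\{v_i\}}(\tt)$ is the product $\prod_{\sigma\in\Gamma}(1-\tt^{\mm_\sigma})^{-\chi(\bD_\sigma)}$, and that the multidegrees $\mm_\sigma=(m_{\sigma\sigma(1)},\ldots,m_{\sigma\sigma(r)})$ are governed by the same minus-inverse-intersection matrix~(\ref{eq:inverse_matrix}) of the $E_8$-graph, extended along the blow-up tree by the rule $m_{k\sigma_0}=s_1 m_{ki}+s_2 m_{kj}$. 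The key preliminary observation is that one can recover, for each $i$, the ``$i$-th slice'' $P_{v_i}(t_i)$ by setting $t_j=1$ for $j\ne i$: this is just the Poincar\'e series of the single divisorial valuation $v_i$, to which Theorem~\ref{theo:one_divisorial} applies (the hypothesis that no blow-up touches a smooth point of $D_8$ is inherited). Thus for each $i$ the combinatorial type of the minimal resolution of $v_i$ alone is determined, in particular the component $D_{\sigma_0(i)}\in\DD'$ from which the tower over $v_i$ grows, the intersection number of its curvette with $D_{\sigma_0(i)}$, and the full chain of vertices from $D_{\sigma_0(i)}$ up to $D_{\sigma^*(i)}$.

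Next I would reconstruct the common base resolution $\pi'$ and the attaching data. Since each single-valuation analysis produces the pair $(i,j)$ and ratio $s_1/s_2$ locating $D_{\sigma_0(i)}$ inside the $E_8$-graph (via Lemmas~\ref{lemma:planar_graph} and~\ref{lemma:divisorD'}), taking the union of these requirements over all $i$ pins down $\pi'$ itself (one takes the coarsest three-tails modification compatible with all the $\sigma_0(i)$). It remains to determine how the individual towers over the various $\sigma_0(i)$ are glued --- specifically, when two valuations $v_i$, $v_{i'}$ share $\sigma_0(i)=\sigma_0(i')$ and even a common initial segment of their towers before splitting. Here the cross-information in the mixed monomials $\tt^{\mm_\sigma}$ is essential: for a vertex $\sigma$ lying on the shared part of two towers one reads both $m_{\sigma\sigma(i)}$ and $m_{\sigma\sigma(i')}$ from a single factor of $P_{\{v_i\}}$, and the point where these two sequences cease to be proportional (in the precise sense given by the semigroup/Puiseux recursion $N_k=e_{k-1}/e_k$ used in Cases~3--5 of Theorem~\ref{theo:One_curve}) is exactly the splitting vertex of the two towers. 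So the plan is: order the valuations, peel off the factors of $P_{\{v_i\}}$ corresponding to each maximal shared sub-tower by looking at which monomials involve which subsets of the $t_i$, and apply the single-variable reconstruction recursively to each branch after the split, exactly as in the proof of Theorem~\ref{theo:one_divisorial} (with the tail-length bookkeeping via $\varepsilon_i=\gcd(\ldots)$ carried over verbatim).

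Concretely the induction is on the number $r$ of valuations. For $r=1$ this is Theorem~\ref{theo:one_divisorial}. For the inductive step, pick the valuation $v_r$ whose tower diverges earliest from the rest (equivalently, whose splitting vertex is lowest in the tree); the restriction $t_r\mapsto 1$ gives $P_{\{v_1,\ldots,v_{r-1}\}}$, to which the inductive hypothesis applies and which reconstructs the minimal resolution of $\{v_1,\ldots,v_{r-1}\}$; then one locates where $v_r$ attaches to that resolution by finding, among the already-reconstructed vertices $\sigma$, the one at which $m_{\sigma\sigma(r)}/m_{\sigma\sigma(j)}$ (for an appropriate $j$) first fails to follow the free-attachment rule, and the remaining tower over that point is handled by the single-valuation argument applied to the ``divided'' series $B_{v_r}(t_r)=P_{\{v_i\}}\big/\prod_{\sigma}(1-t^{m_{\sigma\sigma(r)}})^{s_\sigma}$ analogous to $B(t)$ and $B_v(t)$.

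\textbf{The main obstacle} I anticipate is disentangling the factorization~(\ref{eq:ACampo-divis}) into the pieces belonging to each branch: unlike the plane-curve case, on the $E_8$-singularity a single vertex $\sigma$ can contribute a monomial $\tt^{\mm_\sigma}$ whose exponent vector is ``small'' in several coordinates at once, and one must argue that the unique-factorization property of series in $1+\tt\,\Z[[t_1,\ldots,t_r]]$ together with the monotonicity inequalities among the $m_{\sigma\sigma(i)}$ (the analogue of $m_{\delta_g\tau_0}<m_{\delta_g\tau_1}<\cdots$, and the interlacing $m_{\delta_g\tau_j}<m_{\delta_g\delta_j}<m_{\delta_g\tau_{j+1}}$ from Theorem~\ref{theo:One_curve}) suffices to sort the factors unambiguously by which sub-tower they come from. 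Verifying that these inequalities persist after the extra blow-ups and along every branch --- so that no spurious cancellation or coincidence of exponents across different branches can occur --- is the delicate point; but it is a finite check of the same flavour as Lemma~\ref{lemma:planar_graph} (the graph $G$ being planar in $\R\P^2$ is precisely what rules out accidental proportionalities), extended to the larger tree produced by the blow-ups, and the hypothesis $\sigma_0(i)\ne 8$ is exactly what is needed to exclude the one genuinely ambiguous pair of components.
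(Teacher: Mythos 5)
Your proposal follows essentially the same route as the paper: the projection formula obtained from~(\ref{eq:ACampo-divis}) reduces everything to Theorem~\ref{theo:one_divisorial} for each single valuation, and the splitting vertex of two towers with a common starting component is detected from the proportionality of the exponent vectors $\mm_\sigma$ on the shared part (the paper reduces to $r=2$, reads off the base ratio from the minimal exponent $\mm_8$, and takes the maximal exponent proportional to it, justified by the monotonicity of $m_{\sigma\sigma_1}/m_{\sigma\sigma_2}$ along the geodesic as in Proposition~\ref{prop:P1} and \cite{FAOM}). The only inaccuracy is that you attribute the detection of the splitting point to the Puiseux recursion $N_k=e_{k-1}/e_k$, whereas the actual mechanism is this ratio-monotonicity statement; otherwise the argument matches the paper's.
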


\begin{proof}
Equation~\ref{eq:ACampo-divis} implies the following {\it projection formula\/}:
if $\{i_1,\ldots,i_{\ell}\}\subset\{1,\ldots,r\}$, then the Poincar\'e series of
the $\ell$-index filtration corresponding to the divisorial valuations $v_{i_1}$,\dots, $v_{i_{\ell}}$
is obtained from the Poincar\'e series $P_{\{v_i\}}(t_1,\ldots, t_r)$ by substituting the
variables $t_i$ with $i\notin \{i_1,\ldots,i_{\ell}\}$ by $1$.

\begin{remark}
The last property does not hold for the Poincar\'e series of the filtration defined by a collection
of curve valuations. This makes the proof of the corresponding statement for curves valuations
(Theorem~\ref{theo:curve_collection} below) somewhat more complicated.
\end{remark}

The dual graph of the minimal resolution of a set of divisorial valuations is determined by the dual
graph of the minimal resolution for each divisor plus the deviation points of the resolutions for each pair
of divisors. The projection formula alongside with Theorem~\ref{theo:one_divisorial} imply that
the Poincar\'e series $P_{\{v_i\}}(\tt)$ determines the minimal resolution graph of each valuation from
the collection and, in particular, the component of the exceptional divisor $\DD'$ of the modification $\pi'$
which the resolution of the divisorial valuation start from. If, for two divisorial valuations from the collection,
these starting components are different, one does not need to find the deviation point. Assume that the starting
components coincide. In order to find the deviation point, without lost of generality (due to the projection formula)
one may assume that $r=2$, i.~e., that the collection consists of these two valuations: $v_1$ and $v_2$.
In this situation one has the following picture. In the dual resolution graph on the geodesic inbetween
the vertices $\sigma_1$ and $\sigma_2$ (defining the divisorial valuation) the ratio
$m_{\sigma\sigma_1}/m_{\sigma\sigma_2}$ (as a function on $\sigma$) is strictly monotonous
being maximal at the vertex $\sigma_1$ and minimal at $\sigma_2$;
on the components of the closure of the complement to this geodesic in the dual graph
this ratio is constant: see~\cite[page 43]{FAOM}, see also Proposition~\ref{prop:P1}
below for the same statemant in a somewhat different setting.
One of the components includes all the vertices corresponding to the components of the
exceptional divisor $\DD'$ and, in particular, the vertex $8$.
The factor $(1-t_1^{m_{8\sigma_1}}t_2^{m_{8\sigma_2}})^{-1}$ participates in the decomposition of the Poincar\'e
series $P_{\{v_1, v_2\}}(t_1, t_2)$. Moreover, its exponent $(m_{8\sigma_1},m_{8\sigma_2})$ is the minimal one in it.
The splitting point of the resolutions of the valuations $v_1$ and $v_2$ also participates in the decomposition
and is maximal in the described components, i.~e., among those with
$m_{\sigma\sigma_1}/m_{\sigma\sigma_2}=m_{8,\sigma_1}/m_{8\sigma_2}$. This determines the splitting point between
the resolutions of the valuations $v_1$ and $v_2$.
\end{proof}

%%%%%%%%%%%%%%%%%%%%%%%%%%%%%%%%%%%%%%%%%%%%%%%%%% Section 4
\section{The Poincar\'e polynomial of a reducible curve and the topological type}\label{sec:curve_collection}
%%%%%%%%%%%%%%%%%%%%%%%%%%%%%%%%%%%%%%%%%%%%%%%%%%

Let $C=\bigcup_{i=1}^r C_i$ be a (reducible: $r>1$) curve germ on the surface $(S,0)$ and let
$\pi:(\XX,\DD)\to(S,0)$ be the minimal embedded resolution of the curve $(C,0)\subset(S,0)$. Let us assume
that the resolution process does not contain a blow-up of a smooth (in the exceptional divisor) point of
the component $D_8$ of the minimal resolution of the surface $(S,0)$.

 \begin{theorem}\label{theo:curve_collection}
In the described situation the Poincar\'e series $P_{C}(\tt)$,
$\tt=(t_1, \ldots, t_r)$, of the curve $C = \bigcup_{i=1}^r C_i$
determines the combinatorial type of the
minimal resolution of the curve.
\end{theorem}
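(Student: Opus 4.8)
The plan is to reduce the reducible case to the irreducible case (Theorem~\ref{theo:One_curve}) by exploiting a projection-type argument, but one has to be careful since, as the remark before Theorem~\ref{theo:curve_collection} warns, the naive substitution $t_j\mapsto 1$ for $j\ne i$ does \emph{not} recover the one-variable Poincar\'e series of the branch $C_i$ from $P_C(\tt)$. So first I would establish the correct substitution rule: using the A'Campo-type formula~(\ref{eq:ACampo}), setting $t_j=1$ for $j\ne i$ turns the factor $(1-\tt^{\mm_\sigma})^{-\chi(\oD_\sigma)}$ into $(1-t_i^{m_{\sigma\sigma(i)}})^{-\chi(\oD_\sigma)}$, and the resulting product, after collecting, differs from $P_{C_i}(t_i)$ only by finitely many extra binomial factors coming from the components of $\DD$ that lie ``beyond'' the branch $C_i$ (i.e.\ on the parts of the resolution tree separating $C_i$ from the other branches, and along the other branches). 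I would make precise that these extra factors are controlled and, in particular, that the \emph{smallest} exponents appearing in the one-variable series obtained this way still encode $\ell_i=\widetilde C_i\cdot D_{\sigma_0(i)}$ and the ratios $m_1:m_2:m_3$ needed to run Lemma~\ref{lemma:curveD'}. Hence the Poincar\'e series $P_C(\tt)$ still determines, for each branch $C_i$, the resolution $\pi'$ and the component $D_{\sigma_0(i)}\in\DD'$ that $\widetilde C_i$ meets, and then (by the Case~1--5 analysis in the proof of Theorem~\ref{theo:One_curve}) the full minimal embedded resolution graph $\Gamma_0\cup\Gamma_1^{(i)}$ of each individual branch.

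Next I would reconstruct the \emph{relative position} of the branches, i.e.\ for each pair $(i,j)$ the point at which the geodesics from $\sigma_0(i)$ and $\sigma_0(j)$ (equivalently, the strict transforms $\widetilde C_i$, $\widetilde C_j$) separate in $\XX$. By the same projection formula I may restrict attention to two branches at a time, so assume $r=2$. Here the argument parallels the one in the proof of Theorem~\ref{theo:divisorial_collection}: on the geodesic between the two arrows the ratio $m_{\sigma\sigma(1)}/m_{\sigma\sigma(2)}$ is strictly monotone, while on each connected component of the complement of that geodesic (in particular on the component containing all of $\Gamma_0$, hence the vertex $8$) this ratio is constant — this is the statement cited from \cite[p.~43]{FAOM} and also appearing as Proposition~\ref{prop:P1}. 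The factor $(1-t_1^{m_{8\sigma(1)}}t_2^{m_{8\sigma(2)}})^{-1}$ occurs in $P_{\{C_1,C_2\}}(t_1,t_2)$ with the minimal exponent vector, which pins down the value of the constant ratio $m_{8\sigma(1)}:m_{8\sigma(2)}$ on the $\Gamma_0$-side; among all factors of $P_C$ with that ratio, the one whose exponent vector is maximal corresponds to the splitting vertex. This determines the deviation point for the pair, and doing this for all pairs reconstructs how the individual resolution trees are glued together, hence the combinatorial type of the minimal embedded resolution of $C$.

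The main obstacle, and the step I would spend the most care on, is the first one: the substitution $t_j\mapsto1$ does not behave as cleanly for curve valuations as it does for divisorial ones, so I cannot simply quote Theorem~\ref{theo:One_curve} as a black box applied to the substituted series. Concretely, after substitution the one-variable series one gets is $P_{C_i}(t_i)$ multiplied (and in some binomials divided) by factors $(1-t_i^{m_{\sigma\sigma(i)}})^{\pm1}$ indexed by the components $D_\sigma$ with $\chi(\oD_\sigma)\ne0$ that are not on the ``$C_i$ branch''; I need to verify that none of these spurious factors interfere with the low-degree data ($m_1,m_2,m_3$, the integer $\ell_i$, and the numbers $\mu,\overline\beta_0,\overline\beta_1$) that Lemma~\ref{lemma:curveD'} and Cases~1--5 extract. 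The key quantitative input is the matrix~(\ref{eq:inverse_matrix}): its entries give precise lower bounds on $m_{\sigma\sigma(i)}$ for every component $\sigma$, and combined with the assumption that no branch meets $D_8$ at a smooth point of $\DD'$ (so that the relevant $\sigma_0(i)\in\bigcup_{k\le7}\calE_k$), these bounds guarantee that the smallest few exponents in the substituted series genuinely come from $C_i$ itself and are not overtaken or cancelled by contributions from the other branches or from the connecting part of the tree. Once this separation-of-scales is in place, the rest is a routine bookkeeping combination of Theorem~\ref{theo:One_curve} (applied branch by branch) with the two-variable monotonicity argument above.
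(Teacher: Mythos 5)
Your second step (pairwise determination of the separation vertices via the monotonicity of the ratio $m_{\sigma\sigma(1)}/m_{\sigma\sigma(2)}$ along the geodesic) is essentially the paper's Step 2. The genuine gap is in your first step. By the projection formula~(\ref{eq:E2}), setting $t_j=1$ for $j\ne i_0$ yields
$$
P_{C_{i_0}}(t_{i_0})\cdot\prod_{i\ne i_0}\bigl(1-t_{i_0}^{m_{\sigma(i)}^{i_0}}\bigr),
$$
and the extra factors are \emph{not} separated in scale from the pole factors of $P_{C_{i_0}}$: since the matrix~(\ref{eq:inverse_matrix}) is symmetric, they can coincide with them exactly and cancel. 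Concretely, take $r=2$ with $C_1$ a curvette at $D_6$ and $C_2$ a curvette at $D_1$ (the hypothesis on $D_8$ is satisfied). Then $P_{C_1}(t)=\frac{(1-t^{12})(1-t^{18})}{(1-t^{4})(1-t^{6})(1-t^{9})}$, the spurious factor is $\bigl(1-t^{m_{16}}\bigr)=(1-t^{6})$, and the substituted series is $\frac{(1-t^{12})(1-t^{18})}{(1-t^{4})(1-t^{9})}$. Its unique binomial decomposition gives $m_1=4$, $m_2=9$, so $m_2/m_1>2$ and the criterion of Lemma~\ref{lemma:curveD'} would (wrongly) place $\widetilde C_1$ on $D_1$. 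Thus the low-degree data you rely on ($m_2$, the ratio $m_2:m_1:m_3$, and hence the whole Case~1--5 machinery) is genuinely corrupted by the other branches, and lower bounds on the entries of~(\ref{eq:inverse_matrix}) cannot help, because cancellation happens at exact equality of exponents, not at exponents that are too small.

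The paper proceeds in the opposite order, and this is where almost all of its work lies: it first identifies, directly from the multivariable decomposition $P_C(\tt)=\prod_k(1-\tt^{\nn_k})^{s_k}$, an index $i_0$ and a factor whose exponent vector equals $\mm_{\sigma(i_0)}$. This uses Proposition~\ref{prop:P1} (strict decrease of $q(\sigma)=m_\sigma^j/m_\sigma^i$ along the geodesic $[\sigma(j),\sigma(i)]$ and constancy off it, proved via the Mumford relations), the resulting minimality of the normalized vectors $\frac{1}{n_k^i}\nn_k$, and a case analysis of the sets $E$, $A(k)$, $B(k)$ handling the exceptional configurations (e.g.\ $\nn_k=\mm_3$ with curvettes at $D_1$ and $D_4$, and the $D_4$ versus $D_7$ ambiguity resolved by divisibility by $\mm_8$). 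Only once $\mm_{\sigma(i_0)}$ is known does one divide the substituted series by the now-known product $\prod_{i\ne i_0}\bigl(1-t_{i_0}^{m_{\sigma(i_0)}^{i}}\bigr)$ to recover $P_{C_{i_0}}(t_{i_0})$ exactly and apply Theorem~\ref{theo:One_curve} as a black box, after which induction via~(\ref{eq:E1}) removes the branch. Your proposal is missing precisely this identification of $\mm_{\sigma(i_0)}$; without it the branch-by-branch reduction fails.
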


\begin{proof}
We have to show that the Poincar\'e series $P_C(\tt)$ determines the minimal
resolution graph $\Gamma$ of $C$. In the case under consideration one has a projection formula different
of the one for divisorial valuations.

In what follows, let us denote $m_{\sigma\sigma(i)}$ ($\sigma(i)$ is
the vertex of $\Gamma$ such that the component $D_{\sigma(i)}$ of the exceptional divisor $\DD$ intersects
the strict transform $\widetilde{C}_i$ of the curve $C_i$) by $m_{\sigma}^i$. Therefore one has
$\mm_{\sigma}=(m_{\sigma}^1, \ldots, m_{\sigma}^r)$. The reason (somewhat psychological) for that is the fact that,
for a multi-exponent of a term of the Poincar\'e series $P_C(\seq t1r)$ or of a factor of its decomposition, one
knows its components $m_{\sigma}^1$, \dots, $m_{\sigma}^r$, but does not know the vertex $\sigma$.
One can say that our aim is to find vertices $\sigma(i)$ corresponding to the curve.

Let $i_0\in \{1,\ldots,r\}$. The A'Campo type formula~(\ref{eq:ACampo}) for $P_{C}(\tt)$ implies that
\begin{equation}
\label{eq:E1}
P_C(\tt)_{|_{t_{i_0}=1}} = P_{C\setminus \{C_{i_0}\}}(t_1, \ldots, t_{i_0-1},
t_{i_0 +1}, \ldots, t_r) \cdot (1-\tt^{\mm_{\sigma(i_0)}})_{|_{t_{i_0}=1}} \; .
\end{equation}
Applying (\ref{eq:E1}) several times one gets
\begin{equation}
\label{eq:E2}
P_C(\tt)_{\vert_{t_j=1\text{ for }j\ne i_0}} = P_{C_{i_0}}(t_{i_0}) \cdot \prod_{i\neq i_0}
(1-t_{i_0}^{m_{\sigma(i)}^{i_0}}) \; .
\end{equation}
Pay attention to the fact that
$m_{\sigma(i)}^{i_0} = m_{\sigma(i_0)}^{i}$ and therefore the series
$P_{C_{i_0}}(t_{i_0})$ can be determined from the Poincar\'e series $P_C(\tt)$ if one knows the multiplicity
$\mm_{\sigma(i_0)}$. The strategy of the proof follows the steps from~\cite{FAOM} (see also~\cite{MNach}):

\begin{enumerate}
\item[1)] To detect an index $i_0$ for which one can find the corresponding multiplicity $\mm_{\sigma(i_0)}$
from the A'Campo type formula for $P_C(\tt)$. Then Theorem 2 and equation~(\ref{eq:E2}) permit to recover the
minimal resolution graph $\Gamma_{i_0}$ of the curve
$C_{i_0}$. Equation~(\ref{eq:E1}) gives the possibility to compute the Poincar\'e series
$P_{C\setminus \{C_{i_0}\}}(t_1, \ldots, t_{i_0-1}, t_{i_0 +1}, \ldots, t_r)$ of the curve $C\setminus \{C_{i_0}\}$.
By induction one can assume that the resolution graph $\Gamma^{i_0}$ of the curve
$C\setminus \{C_{i_0}\}$ is known.

\item[2)] To determine the separation vertex of the curves $C_{i_0}$ and $C_j$ for $j\neq i_0$ in order
to join the graphs $\Gamma_{i_0}$ and $\Gamma^{i_0}$ to obtain the resolution graph $\Gamma$.
\end{enumerate}

Once we finish the first step, the second one almost repeats the same steps in the proof of
Theorem~\ref{theo:divisorial_collection} (for divisorial valuations). Therefore we omit the analysis of 2).

Let $[\sigma(j), \sigma(i)]\subset \Gamma$ be the (oriented) geodesic from
$\sigma(j)$ to $\sigma(i)$ and let
$\{\Delta_p\}$, $p\in\Pi$, be the connected components of
$\Gamma\setminus [\sigma(j),\sigma(i)]$. For each $p\in\Pi$ there exists an
unique $\rho_p\in [\sigma(j),\sigma(i)]$ connecting $\Delta_p$ with
$[\sigma(j),\sigma(i)]$, i.~e., such that
$\Delta^*_{p} = \Delta_p\cup \{\rho_p\}$ is connected.

\begin{proposition}\label{prop:P1}
Let $q : \Gamma\to \Q$ be the function defined by
$q(\alpha) = m_{\alpha}^j/m_{\alpha}^i$ for $\alpha\in \Gamma$. Then one has:
\begin{enumerate}
\item The function $q$ is strictly decreasing along the geodesic
$[\sigma(j), \sigma(i)]$.
\item For each $p\in\Pi$, the function $q$ is constant on $\Delta^*_p$.
\end{enumerate}
\end{proposition}

\begin{proof}
Let $\b{C}_k$ ($k=1,\ldots,r$) be the total transform of the curve $C_k$ in $\XX$. One has
$$
\b{C}_k = \w{C}_k + \sum_{\sigma\in \Gamma} m_{\sigma}^k D_\sigma\;,
$$
where $\w{C}_k$ is the strict transform of the curve $C_k$.
For each component $D_\alpha$, $\alpha\in \Gamma$, one has $\b{C}_k\cdot D_{\alpha}=0$
and therefore
\begin{equation}\label{eq:M}
\w{C}_k \cdot D_{\alpha} + \sum_{\sigma\in \Gamma} m_{\sigma}^k \; D_{\sigma}\cdot D_{\alpha} = 0.
\end{equation}
This equation is a consequence of the Mumford formula (see~\cite[Equation (1)]{MUM}) applied to the
function defining the curve $C_k$.
(Let us recall that on the $E_8$ surface singularity $(S,0)$ all divisors are Cartier ones.)

\begin{lemma}\label{lemma:4}
Let $D_{\alpha}$ be a component of the exceptional divisor $\DD$ such that $\w{C}_i\cdot D_{\alpha}=0$
and let $\{\seq {\rho}1s\}\subset \Gamma$
be the set of all vertices connected by an edge with $\alpha$.
Let us assume that either $\w{C}_j$ intersects $D_{\alpha}$ or
there exists $\rho_{i_0}$ such that $q(\rho_{i_0}) > q(\alpha)$. Then
there exists $\rho_k$ such that $q(\alpha)> q(\rho_k)$.
\end{lemma}

\begin{proof}
Assume that $q(\rho_k)\ge q(\alpha)$ for any $k=1,\ldots, s$. Applying (\ref{eq:M}) to
$C_j$ and $C_i$ one gets:
\begin{align*}
0 &= \w{C}_j \cdot D_{\alpha} + m_{\alpha}^i D_{\alpha}^2 + \sum_{k=1}^s m_{\rho_k}^j
\ge
\\
&\ge  \w{C}_j \cdot D_{\alpha} + m_{\alpha}^i D_{\alpha}^2 + \sum_{k=1}^s
q(\alpha) m_{\rho_k}^i \ge \\
& =  \w{C}_j \cdot D_{\alpha} + q(\alpha) ( m_{\alpha}^j D_{\alpha}^2 + \sum_{k=1}^s
m_{\rho_k}^i) = {\widetilde{C}}_j \cdot D_{\alpha} \ge 0
\end{align*}
The inequality is strict if $\w{C}_j \cdot D_{\alpha}>0$ or if
there exists $i_0$ such that $q(\rho_{i_0})>q(\alpha)$. This implies the statement.
\end{proof}

Let $\alpha$ and $\beta$ be two vertices of $\Gamma$ connected by an edge and let
$q(\alpha)>q(\beta)$. Lemma~\ref{lemma:4} permits to construct a maximal
sequence $\alpha_0, \alpha_1, \ldots, \alpha_k$ of consecutive vertices starting with $\alpha$ and
$\beta$ (i.~e., $\alpha_0 = \alpha$, $\alpha_1=\beta$) such that $q(\alpha_i)>q(\alpha_{i+1})$.
(We will call a sequence of this sort {\em a decreasing path}. If the inequality is in the other direction,
the path will be called {\em increasing}.)
The maximality means that either $\alpha_k$ is a deadend of $\Gamma$ or $\w{C}_i\cdot D_{\alpha_k}\neq 0$.
If $\alpha_k$ is a deadend, $\alpha_{k-1}$ is the only vertex connected with $\alpha_k$ and
Lemma~\ref{lemma:4} implies that $q(\alpha_k)=q(\alpha_{k-1})$. Therefore the
constructed path finishes by the vertex $\alpha_k=\sigma(i)$.
Note that, if $\alpha\in [\sigma(j), \sigma(i)]$ and $\beta\notin [\sigma(j),\sigma(i)]$, the end of
a maximal decreasing (or increasing) path has to finish at a deadend and therefore $q(\alpha)=q(\beta)$.
In particular, this implies that
the function $q$ is constant on each connected set $\Delta^*_p$.

Assume that $\sigma(i)\neq \sigma(j)$. Lemma~\ref{lemma:4} implies that there exists
a vertex $\alpha_1$ connected with $\sigma(j)$ such that
$q(\sigma(j)) > q(\alpha_1)$. Therefore the maximal decreasing path starting with
$\sigma(j)$ and $\alpha_1$ coincides with the geodesic $[\sigma(j), \sigma(i)]$.
\end{proof}

Proposition~\ref{prop:P1} implies that, for any fixed $i_0$ and
for any $j\neq i_0$ and $\sigma\in \Gamma$, one has
$m_{\sigma}^j/m_{\sigma}^{i_0} \ge m_{\sigma(i_0)}^j/m_{\sigma(i_0)}^{i_0}$. Therefore one has
$$
\frac {1}{m_{\sigma}^{i_0}} \mm_{\sigma} \ge
\frac {1}{m_{\sigma(i_0)}^{i_0}} \mm_{\sigma(i_0)} \; .
$$

Let $P_C(\tt) = \prod_{k=1}^{p} (1-\tt^{\nn_k})^{s_k}$ be the Poincar\'e series of the curve
$C$, where $s_k\neq 0$ for all $k$. For $i\in \{1,\ldots,r\}$ let
$k=k(i)$ be such that
$$
\frac{1}{n_{j}^{i}} \nn_{j} \ge
\frac{1}{n_{k}^{i}} \nn_{k}
$$
for all $j$.
Let $E\subset \{1,\ldots, p\}$ be the set of indices $k$ such that
$k=k(i)$ for some $i\in \{1,\ldots, r\}$ and for $k\in E$ let $A(k)\subset
\{1,\ldots, r\}$ denote the set of indices $i$ such that $k=k(i)$. Note that
$A(k)$ contains all the indices $i\in \{1,\ldots,r\}$ such that
$\nn_k = \mm_{\sigma(i)}$. Let $B(k)$ be the subset of such indices.
Our aim is to show that one can find $k\in E$ such that $B(k)\neq \emptyset$.

Let $j\in A(k)$, $j\notin B(k)$. One has
$$
\frac {1}{n_{k}^{j}}\nn_{k} > \frac{1}{m_{\sigma(j)}^{j}} \mm_{\sigma(j)}
$$
and therefore $\chi(\oD_{\sigma(j)})=0$. This
implies that $\sigma(j)$ is connected with only one vertex in $\Gamma$ (plus
the  arrow corresponding to $\w{C}_j$), i.~e., $\sigma(j)$ is a deadend of the
resolution graph of the curve $C\setminus\{C_j\}$. In particular, there are at most two indices
$i,j \in A(k)$ such that $\mm_{\sigma(i)}$ and $\mm_{\sigma(j)}$ are different from
$\nn_k$. Moreover, if there are two indices of this sort, the vertex $\sigma\in
\Gamma$ such that
$\nn_k = \mm_{\sigma}$ is the vertex 3 corresponding to the divisor
$D_3$ of the minimal resolution of $(S,0)$.
In fact in this case the strict transforms
$\w{C}_i$ and $\w{C_j}$ are curvettes at the divisors $D_1$ and $D_4$.
Therefore, if $\# A(k)\ge 3$, there exists $i_0\in B(k)$.

Let $k\in E$ be such that $B(k)=\emptyset$ and let
us assume that $\nn_k = \mm_3$ (i.~e., that the multiplicity $\nn_k$ is the
multiplicity of the divisor $D_3$).
Let $\mm_{8} = (m_8^1, \ldots, m_8^r)$ be the multiplicity of the divisor $D_8$.
Notice that $\mm_8$ is determined by the Poincar\'e series $P_C(\tt)$ because
the decomposition of the Poincar\'e series contains the factor $(1-\tt^{\mm_8})^{-1}$ and, moreover,
the multiplicity $\mm_8$  is the smallest one appearing in it.

If there exists $i\in A(k)$ such that $\w{C_i}$ is a curvete at $D_1$, then $m_8^i = 2$ (note that
$m_8^i=2$ implies that $\w{C}_i$ is a curvette at $D_1$). In this case, if $A(k)\neq \{1,\ldots, r\}$,
one can choose any other $k'\in E$ instead of $k$.
If $A(k) = \{1,\ldots, r\}$, then  $r=2$ and the branches $C_1$ and $C_2$ are
curvettes at the divisors $D_1$ and $D_4$ (see Figure~\ref{fig:Bad_case}).
This situation is equivalent to have the Poincar\'e series of the form
$P_C(t_1,t_2) = (1-\tt^{(2,3)})^{-1}(1-\tt^{(10,15)})$, what gives the statement in
this case. Note also that in this case $m_8^2=3$.
If $A(k)=\{i\}$ and $\w C_i$ is a curvette at the divisor $D_4$ then
one has $m_8^i=3$. However this condition does not characterize completely the
situation described: for $\nn_{k'}=\mm_{7}$ and $A(k')=\{j\}$ with
$\w C_j$ a curvette at $D_7$ one has also that
$m_8^j=3$. If the both multiplicities
appear simultaneously, one can distinguish the first one because
$\nn_{k'} = \mm_{7}$ is always a multiple of $\mm_{8}$ (see Proposition~\ref{prop:P1}) but
$\nn_k$ is not (in the presence of $k'$). This permits to determine the index $k$ in
this case from the information given by the series $P_C(\tt)$.

\begin{figure}[h]
$$
\unitlength=1.0mm
\begin{picture}(80.00,25.00)(-10,15)
\thinlines
\put(-15,30){\line(1,0){90}}
\put(-15,30){\vector(1,1){7}}
\put(-15,30){\circle*{2}}
\put(0,30){\circle*{1}}
\put(15,30){\circle*{2}}
\put(30,30){\circle*{1}}
\put(45,30){\circle*{1}}
\put(60,30){\circle*{1}}
\put(75,30){\circle*{1}}
\put(15,30){\line(0,-1){15}}
\put(15,15){\circle*{2}}
\put(15,15){\vector(1,1){7}}
\put(-18,33){{$1$}}
\put(11,33){{$\sigma = 3$}}
\put(74,33){{$8$}}
\put(10,15){{$4$}}
\end{picture}
$$
\caption{The case $\nn_k=\mm_3$, $r=2$, $B(k)=\emptyset$}
\label{fig:Bad_case}
\end{figure}

Let us now consider the case when one has $B(k) = \emptyset$ and
$\nn_k = \mm_{\sigma} \neq \mm_3$ for some $\sigma\in \Gamma$. In this case one has
$A(k)=\{i\}$ and
$\sigma(i)$ is a deadend of the dual resolution graph of the curve $C\setminus \{C_i\}$. In particular,
the vertex $\sigma$ appears after $\sigma(i)$ in the resolution process of a certain branch $C_j$,
$j\neq i$, which is not a curvette at $D_{\sigma}$. It is clear that in this case $\nn_k <
\mm_{\sigma(j)}$ and also $\nn_k < \nn_{k(j)}$. Thus in this case we
take $k'=k(j)$ and $\nn_{k'}$ instead of $k$ and $\nn_k$.
Iterating this procedure one gets $k'$
such that $B(k')\neq \emptyset$. Note that this situation can be determined from
$P_C(\tt)$ taking $k\in E$ such that $\nn_k$ is maximal among the elements $\nn_k$
for $k\in E$ not excluded on the previous stages.

Once we have an index $k\in E$ such that $B(k)\neq \emptyset$ we have to choose an
index $i_0\in B(k)$. Since $n_k^i > n_k^j$ for $i\in B(k)$ and $j\in A(k)\setminus B(k)$ and
$n_k^i= n_k^j$ for $i,j\in B(k)$, for the role of $i_0$ one can take an index from $A(k)$ such that
$n_k^{i_0}$ is the maximal one in $\{n_k^i : i\in A(k)\}$.
This finishes the step 1) of the proof and thus the proof itself.
\end{proof}

Addresses:

A. Campillo and F. Delgado:
IMUVA (Instituto de Investigaci\'on en
Matem\'aticas), Universidad de Valladolid,
Paseo de Bel\'en, 7, 47011 Valladolid, Spain.
\newline E-mail: campillo\symbol{'100}agt.uva.es, fdelgado\symbol{'100}agt.uva.es

S.M. Gusein-Zade:
Moscow State University, Faculty of Mathematics and Mechanics, Moscow, GSP-1, 119991, Russia.
\newline E-mail: sabir\symbol{'100}mccme.ru

\end{document}